\newcommand{\leqnomode}{\tagsleft@true}
\newcommand{\reqnomode}{\tagsleft@false}
\newtheorem{theorem}{Theorem}[section]
\newtheorem{corollary}[theorem]{Corollary}
\newtheorem{lemma}[theorem]{Lemma}
\newtheorem{proposition}[theorem]{Proposition}
\theoremstyle{definition}
\newtheorem{definition}[theorem]{Definition}
\theoremstyle{remark}
\newtheorem{remark}[theorem]{\textbf{Remark}}
\newtheorem{example}[theorem]{Example}
\newtheorem{question}[theorem]{Question}
\numberwithin{equation}{section}
\newcommand{\E}{\mathbb{E}}
\renewcommand{\P}{\mathbb{P}}
\newcommand{\M}{\mathbb{M}}
\newcommand{\di}{\mathrm{d}}
\newcommand{\pian}[2]{\dfrac{\partial #1}{\partial #2}}
\newcommand{\R}{\mathbb{R}}
\newcommand{\Rp}{\R_+}
\newcommand{\Lc}{\mathcal{L}}
\newcommand{\as}{a.s.}
\newcommand{\ie}{i.e.}
\newcommand{\eg}{e.g.}
\newcommand{\dx}{\mathrm{d}x}
\newcommand{\dy}{\mathrm{d}y}
\newcommand{\dt}{\mathrm{d}t}
\newcommand{\F}{\mathcal{F}}
\newcommand{\Fc}{\mathcal{F}}
\newcommand{\Gc}{\mathcal{G}}
\newcommand{\Bc}{\mathcal{B}}
\newcommand{\eps}{\varepsilon}
\newcommand{\indic}[1]{\boldsymbol{1}_{\{\ensuremath{#1}\}}}
\DeclareMathOperator{\supp}{supp}
\newcommand{\Lip}{\mathcal{L}ip}
\newcommand{\SL}{\mathcal{SL}}
\newcommand{\Ep}[1]{\E\left[#1\right]}
\renewcommand{\Pr}{\P}
\renewcommand{\Mc}{\mathcal{M}}
\newcommand{\Pc}{\mathcal{P}}
\newcommand{\Wc}{\mathcal{W}}
\newcommand{\W}{\mathcal{W}}
\newcommand{\cadlag}{c\`adl\`ag}
\definecolor{orange}{rgb}{1,0.3,0.2}
\newcommand{\nn}{\nonumber}
\newcommand{\Nc}{\mathcal{N}}
\newcommand{\om}{\omega}
\newcommand{\Rc}{\mathcal{R}}
\DeclareMathOperator{\essinf}{ess\;inf}
\title[MVMs and Bass-type solutions]{Measure-valued martingales and optimality of Bass-type solutions to the Skorokhod Embedding Problem}
\author[Beiglb\"ock]{Mathias Beiglb\"ock}
\thanks{MB gratefully acknowledges support through FWF grant Y782.}
\address{
Technische Universit{\"a}t Wien, Vienna, Austria. 
e-mail: \texttt{mathias.beiglboeck@tuwien.ac.at}.
}
\author[Cox]{Alexander M.~G.~Cox}
\address{
University of Bath., Bath, U.~K..
e-mail: \texttt{a.m.g.cox@bath.ac.uk}.
}
\author[Huesmann]{Martin Huesmann}
\thanks{MH gratefully acknowledges support from the Deutsche Forschungsgemeinschaft
through the CRC 1060 The Mathematics of Emergent Effects and the Hausdorff Center for Mathematics.}
\address{
University of Bonn, Bonn, Germany. 
e-mail: \texttt{huesmann@iam.uni-bonn.de}.
}
\author[K\"allblad]{Sigrid K{\"a}llblad}
\address{
Technische Universit{\"a}t Wien, Vienna, Austria. 
e-mail: \texttt{sigrid.kaellblad@tuwien.ac.at}.
}
\date{\today}
\begin{document}
\begin{abstract}
  In this paper we consider (probability-)measure valued processes, which we call MVMs, which have a natural martingale structure. Following previous work of Eldan and Cox-K\"{a}llblad, these processes are known to have a close connection to the solutions to the Skorokhod Embedding Problem. In this paper, we consider properties of these processes, and in particular, we are able to show that the MVMs connected to the Bass and Root embeddings have natural measure-valued analogues which also possess natural optimality properties. We also introduce a new MVM which is a generalisation of both the Bass and Root MVMs.
\end{abstract}
\maketitle

\section{Introduction}
	
We consider here an alternative approach to the Skorokhod embedding problem (SEP) which is based on viewing real-valued processes as measure-valued stochastic processes. The Skorokhod Embedding Problem (SEP) is a longstanding and classical problem in probability; we refer the reader to the survey articles \cite{Ob,Ho}, and the recent paper \cite{BCH} for a more recent approach inspired by methods from Optimal Transport.

The observation which connects the SEP and measure-valued processes is the following: that, modulo technicalities, there is a one-to-one correspondence between:
\begin{itemize}
\item the set of (continuous, UI) martingales $M_t$ with $M_0 = 0$ and $M_\infty \sim \mu$;
\item the set of (continuous) measure-valued martingales (MVMs) $\xi_t$ with $\xi_0 = \mu$ and $\xi_\infty = \delta_y$ for some $y \in \R^d$.
\end{itemize}
The first formulation is (up to a time change, and for 1-dimensional processes), equivalent to the SEP, and we say that a measure-valued process $\xi_t$ is a measure-valued martingale (MVM) if, for any $A \in \Bc(\R^d)$, $\xi_t(A)$ is a martingale. To see the correspondence, given $M_t$, we may define $\xi_t$ by $\xi_t(A):= \Pr(M_\infty \in A | \Fc_t)$ and, conversely, $M_t:= \int x \, \xi_t(\dx)$; see Section~\ref{sec:basic} for details.

While there is a one-to-one correspondence between processes and MVMs on $[0,\infty)$, for any $t<\infty$, the process $\xi_s$, $s\le t$, naturally carries more information than the corresponding process $M_s$, $s\le t$, since also the marginal distribution to eventually be embedded is known. In \cite{CoxKallblad}, this was used to formulate optimal embedding problems as dynamic programming problems, exploiting the fact that the MVM approach allows one to include the terminal embedding constraint in the current state of the controlled process; see also \cite{BCS,BM,SK}. MVMs and their connection to the SEP have also appeared in \cite{Eldan:2016aa}. There is also a stream of literature dating back to at least \cite{Walsh:1986aa} who look at \emph{martingale measures}, where the assumption that the measures are probability measures is typically dropped. Subsequently, a substantial literature on these processes has developed, although much of it under the assumption that the martingale measures are either \emph{orthogonal}, or have \emph{nuclear covariance}, \eg{} \cite{El-Karoui:aa}. Such processes have been useful in a variety of applications; see \eg{} \cite{imkeller2001free}.

Here, we consider an application of the measure-valued viewpoint that truly exploits the metric structure of the underlying space of measures. Specifically, motivated by the theory of optimal transport, we equip the space of measures with the $1$-Wasserstein metric. The metric structure then allows for a study of functions defined on this space.  First, we study the evolution over time of the Wasserstein distance between to measure-valued martingales: we show that for any two MVMs $\eta_t$ and $\xi_t$, the process $t\mapsto \Wc(\xi_t,\eta_t)$ is a sub-martingale for $t\ge 0$.
Second, we introduce a suitable notion of `speed'. That is, a means by which one can measure how fast a measure-valued process evolves over time -- while the speed of an MVM $\xi_t$ is always bounded from below by the quadratic variation of its mean process $M_t$, these quantities will in general differ.

By use of these observations, we then obtain two optimality properties of the well-known Bass \cite{Bass:1983aa} solution to the SEP. We show that when $\eta_t$ is the MVM associated with a Brownian motion, then the above-mentioned process measuring the distance of $\eta_t$ to $\xi_t$, is in fact a martingale -- as opposed to a sub-martingale -- when evaluated for the Bass solution. 
On the other hand, the Bass MVM also miminises the speed among all MVMs. We also obtain an optimality property of the Root \cite{root1969existence} solution. In fact, modulo a suitable scaling of space, the Bass and Root solutions solve the same optimisation problem. 
We emphasise that these optimality properties genuinely exploit the properties of the embeddings viewed as measure-valued martingales rather than stopped processes.  We also discuss Markov properties of the involved MVMs.

\subsection{Notation}

We recall some useful definitions and notation relating to probability measures. Specifically, we recall that the pushforward of a probability measure $\lambda$ on $X$ by a function $f: X \to Y$, is the measure $\mu$ given by $\mu(A) = \lambda(f^{-1}(A))$, and is denoted $f_{\#}\lambda$.  Further, on the space of probability measures, we introduce the functional $\M(\mu) = \int x \, \mu(\dx)$ for the mean.  For a semimartingale $X$, we write $\langle X \rangle$ for the quadratic variation process.

\section{Basics on Measure-Valued Martingales}\label{sec:basic}

For any $p\ge 1$, we introduce the set of $p$-integrable probability measures:
\begin{equation}
  \label{eq:M1Defn}
  \Pc_p:= \left\{ \mu \in \Mc_+(\R^d) : \mu(\R^d) = 1, \int |x|^p \, \mu(\dx) < \infty\right\},
\end{equation}
where $\Mc_+(X)$ is the set of non-negative measures on $X$; we will mostly be interested in the set $\Pc_1$.

We consider a fixed underlying filtered probability space, $(\Omega, \Fc, (\Fc_t), \Pr)$, satisfying the usual conditions.
\begin{definition}
  \label{def:MVM}
  We say that an adapted process $(\xi_t)_{t\geq 0}$ with $\xi_t\in\Pc_1$, $t\ge 0$, is a {\it measure-valued
    martingale} if, for any $f \in C_b(\R^d)$, $\xi_\cdot(f) := \int f(x)
  \xi_\cdot(\dx)$ is a martingale. 
\end{definition}

Note, trivially, that for $f \in C_b(\R^d)$, the martingale $\xi_\cdot(f)$ is uniformly integrable with well defined limit $\xi_{\infty}(f)$ (in particular, $\xi_\infty$ is a measure; see \citep[Proposition~2.1]{Horowitz:aa}).  We also note that an adapted process $\xi_\cdot\in\Pc_1$ is a measure-valued martingale, if and only if, $\xi_\cdot(A)$ is a martingale for any $A\in\Bc(\R)$, and then, in fact, $\xi_\cdot(f)$ is a martingale for any (non-negative) measurable function; see \citep[Remark~2]{CoxKallblad}.

We also wish to discuss continuity of measure-valued martingales. In order to do this, we make the following definition:
\begin{definition}
  \label{def:CMVM}
  We say that a measure-valued martingale is \emph{continuous} if, for any 1-Lipschitz function $f$, $\xi_\cdot(f) = \int f(x) \, \xi_\cdot(\dx)$ is a continuous process.
\end{definition}

It immediately follows that $\M(\xi_\cdot)$ is a continuous process whenever $\xi_\cdot$ is continuous; conversely, whenever $\xi_\cdot$ is continuous in the sense of weak convergence of measures, and also its mean is continuous, it is continuous in the above sense.  Specifically, Definition \ref{def:CMVM} is equivalent to requiring continuity of $t \mapsto \xi_t$ in the topology of $\Wc_1$, the $L^1$ Wasserstein metric (cf. \eqref{eq:wasserstein_metric} below), by the duality of the Wasserstein distance \citep[Theorem 6.1.1]{Ambrosio:2008aa}.
  	
Since our underlying probability space is assumed to satisfy the usual conditions, of course, for every $f \in C_b(\Rp)$, the martingale $\xi_\cdot(f)$ has a \cadlag{} version. More pertinently, according to \citep[Theorem~2.5]{Horowitz:aa} and \citep[Remark~4]{CoxKallblad}, we can choose a version of any given measure-valued martingale $\xi$, such that $\xi_\cdot(f)$ is \cadlag{} for every 1-Lipschitz function $f$.  In what follows, we will assume that we always take this \cadlag{} (in the sense of Definition \ref{def:CMVM}) version of $\xi$.  Moreover, by a slight modification of \citep[Remark~4]{CoxKallblad}, given that $\xi_0\in\Pc_p$, for some $p\ge 1$, there exists a version of $\xi$ which is \cadlag{} with respect to continuity in the topology induced by $\Wc_p$; whenever $\xi_0\in\Pc_p$ we will assume that we take this version.

We need one further concept. Consider the set of singular measures on $\R^d$: $\Pc^s := \{ \mu \in \Pc_1 : \mu = \delta_y, y \in \R^d\}$. Motivated by the fact that the support of a measure-valued martingale can only ever decrease -- if $\xi_{t_0}(A) = 0$, then $\xi_t(A) = 0$ for all $t \ge t_0$ -- we define as follows:

\begin{definition}
  \label{def:TMVM}
  We say that a measure-valued martingale $\xi_\cdot$ is {\it terminating} if $\xi_t \to \xi_\infty\in \Pc^s$ \as{} as $t\to\infty$, where the convergence is in the sense of weak convergence of measures. It is {\it finitely terminating} if $\tau_s:= \inf \{t \ge 0: \xi_t \in \Pc^s\}$ is finite a.s.
\end{definition}

\begin{lemma} \label{lem:MVMProp} Suppose $\xi_\cdot$ is a terminating measure-valued martingale with $\xi_0 = \mu$.  Then $X_\cdot := \M(\xi_\cdot)$ is a UI martingale with $X_{\infty} \sim \mu$.
\end{lemma}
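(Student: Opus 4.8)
The plan is to recover $X_\cdot=\M(\xi_\cdot)$ from bounded test functions by truncation, the only delicate point being that the identity map $x\mapsto x$ is unbounded. For $N\ge1$ let $f_N(x):=x\,\min(1,N/|x|)$ (with $f_N(0):=0$), the nearest-point projection of $\R^d$ onto the closed ball of radius $N$ centred at the origin; each $f_N$ is bounded by $N$ and $1$-Lipschitz, hence lies in $C_b(\R^d)$. By Definition~\ref{def:MVM}, $X^N_\cdot:=\xi_\cdot(f_N)$ is therefore a bounded (in particular uniformly integrable) martingale with $\E[X^N_t]=\xi_0(f_N)=\int f_N\,d\mu$ for every $t$; and since $\xi$ is terminating, writing $\xi_\infty=\delta_Y$, we have $X^N_t\to\xi_\infty(f_N)=f_N(Y)$ a.s.\ and in $L^1$ as $t\to\infty$.

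The key step is a uniform-in-$t$ control of the truncation error. Since $|x-f_N(x)|=(|x|-N)^+$, for every $t$ one has pointwise $|X_t-X^N_t|\le\xi_t(g_N)$, where $g_N(x):=(|x|-N)^+\ge0$. Because $\xi_\cdot(g)$ is a martingale for every non-negative measurable $g$ (see \citep[Remark~2]{CoxKallblad}; alternatively, write $g_N=\lim_M (g_N\wedge M)$ and pass to the limit by monotone convergence, both under $\xi_t$ and in expectation), it follows that $\E[\xi_t(g_N)]=\xi_0(g_N)=\int(|x|-N)^+\,d\mu=:\alpha(N)$, independently of $t$, with $\alpha(N)\to0$ as $N\to\infty$ since $\mu\in\Pc_1$. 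Hence $\sup_{t\ge0}\E|X_t-X^N_t|\le\alpha(N)\to0$.

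From here everything is routine assembly. The estimate gives $X_t\in L^1$ with $\sup_t\E|X_t|\le\int|x|\,d\mu$; moreover $\|\E[X_t\mid\Fc_s]-X_s\|_{L^1}\le\E|X_t-X^N_t|+\E|X^N_s-X_s|\le2\alpha(N)$ for all $N$, so $X_\cdot$ is a martingale; and $\{X_t\}_{t\ge0}$, being $L^1$-bounded and uniformly (in $t$) $L^1$-close to the uniformly integrable families $\{X^N_t\}_{t\ge0}$, is itself uniformly integrable by the standard $\eps$-$\delta$ argument. Consequently $X_t\to X_\infty$ a.s.\ and in $L^1$, with $\E[X_\infty\mid\Fc_t]=X_t$. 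To identify $X_\infty$, letting $t\to\infty$ in $\E|X_\infty-f_N(Y)|\le\E|X_\infty-X_t|+\E|X_t-X^N_t|+\E|X^N_t-f_N(Y)|$ yields $\E|X_\infty-f_N(Y)|\le\alpha(N)$; since $|f_N(Y)|=|Y|\wedge N\uparrow|Y|$ and $\E|Y|=\lim_N\E[\xi_\infty(|\cdot|\wedge N)]=\lim_N\int(|x|\wedge N)\,d\mu\le\int|x|\,d\mu<\infty$ (again by testing against the bounded continuous functions $|\cdot|\wedge N$ and using $\xi_\infty=\delta_Y$), dominated convergence forces $X_\infty=Y$ a.s. Finally, for any $g\in C_b(\R^d)$, bounded convergence gives $\E[g(Y)]=\E[\xi_\infty(g)]=\lim_t\E[\xi_t(g)]=\int g\,d\mu$, so $Y\sim\mu$ and therefore $X_\infty\sim\mu$.

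The step I expect to be the main obstacle is the uniform-in-$t$ truncation estimate of the second paragraph: because $\M$ is not continuous for weak convergence and $x\mapsto x$ is unbounded, neither the martingale property nor uniform integrability of $X_\cdot$ is immediate from Definition~\ref{def:MVM}, and the crux is to convert the integrability tail of $\mu$ (that is, $\mu\in\Pc_1$) into an error bound $\alpha(N)$ that does not degrade in $t$ --- for which one needs the martingale identity $\E[\xi_t(g)]=\xi_0(g)$ for non-negative, possibly unbounded, $g$.
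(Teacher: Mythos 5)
Your proof is correct, and while it rests on the same basic ingredients as the paper's argument (truncating the unbounded test function $x\mapsto x$, using that $\xi_\cdot(f)$ is a bounded martingale for bounded $f$, and exploiting $\mu\in\Pc_1$), it is assembled along a genuinely different route. The paper first identifies the law of the terminal atom, $X_\infty\sim\mu$, which in particular gives integrability of $X_\infty$, and then obtains everything at once from the closure identity $X_t=\E[X_\infty\,|\,\Fc_t]$: this follows by letting $K\to\infty$ in $\xi_t(f_K)=\E[f_K(X_\infty)\,|\,\Fc_t]$, using dominated convergence under $\xi_t$ on one side and conditional dominated convergence (dominated by the integrable $|X_\infty|$) on the other, whence uniform integrability is automatic because the martingale is closed. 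You instead avoid conditional dominated convergence and make the key step a uniform-in-$t$ tail estimate, $\sup_t\E|X_t-X^N_t|\le\int(|x|-N)^+\,\di\mu=\alpha(N)\to 0$, obtained from the constancy of $\E[\xi_t(g)]$ for the nonnegative unbounded $g_N=(|\cdot|-N)^+$ (your monotone-convergence argument with the continuous bounded truncations $g_N\wedge M$ makes this self-contained, so the appeal to Remark~2 of \cite{CoxKallblad} is not even needed); the martingale property and uniform integrability then follow by $L^1$-approximation, and only afterwards do you identify $X_\infty$ with the atom $Y$ of $\xi_\infty$ and its law with $\mu$. Your version is longer but has two small virtues: it isolates exactly where $\mu\in\Pc_1$ enters (through $\alpha(N)$), and it carefully distinguishes the a.s./$L^1$ limit $X_\infty$ of the mean process from the terminal atom $Y$ and proves they coincide, a point the paper's proof treats implicitly. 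Conversely, the paper's closure argument is shorter precisely because proving $X_\infty\sim\mu$ first hands you an integrable closing random variable for free.
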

\begin{proof}
  First, observe that by the martingale property, for $f \in C_b(\R^d)$,
  \begin{align*}
    \Ep{f(X_{\infty})} =  \Ep{\int f(x) \, \xi_\infty(\dx)}
                       =  \Ep{\int f(x) \, \xi_0(\dx)}  
                       =  \int f(x) \, \mu(\dx).
  \end{align*}
  and so $X_\infty \sim \mu$. In particular, $X_\infty$ is integrable.

  Now observe that the martingale statement can be shown for each co-ordinate individually. Write $f_K(x):= (x \wedge K)\vee(-K)$. Then $\xi_t \in \Pc_1$ implies that \[\int f_K(x) \, \xi_t(\dx) \to \int x \, \xi_t(\dx) = X_t\] as $K \to \infty$. Moreover, $\int f_K(x) \, \xi_t(\dx) = \Ep{f_K(X_\infty)|\Fc_t} \to \Ep{X_\infty|\Fc_t}$ by the (conditional) Dominated Convergence Theorem. Hence $X_t = \Ep{X_\infty|\Fc_t}$ for all $t \ge 0$, and it follows that $X_t$ is a UI martingale.
\end{proof}

\begin{corollary}
  If $\xi_\cdot$ is a terminating measure-valued martingale with $\xi_0=\mu$, then for every $1$-Lipschitz function $f$, $X^f_\cdot := \xi_\cdot(f)$ is a uniformly integrable martingale with $X_0^f = \int f \, \di \mu$ and $X_\infty^f \sim f_\#\mu$.
\end{corollary}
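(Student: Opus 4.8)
The plan is to reduce the statement for general $1$-Lipschitz $f$ to the statement already established in Lemma~\ref{lem:MVMProp} for the identity map, by pushing everything forward through $f$. The key observation is that if $\xi_\cdot$ is a terminating measure-valued martingale with $\xi_0 = \mu$, then $\eta_\cdot := f_\# \xi_\cdot$ is again a measure-valued martingale, now started at $f_\# \mu$: indeed, for $g \in C_b(\R)$ we have $\eta_t(g) = \int g \,\di(f_\#\xi_t) = \int g\circ f \,\di \xi_t = \xi_t(g\circ f)$, and since $g \circ f \in C_b(\R^d)$ this is a martingale by the definition of MVM. One must also check $\eta_t \in \Pc_1$, i.e.\ that $\int |y|\, \eta_t(\di y) = \int |f(x)|\, \xi_t(\dx) < \infty$; this follows since $f$ is $1$-Lipschitz, so $|f(x)| \le |f(0)| + |x|$, and $\xi_t \in \Pc_1$.

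The second point is that $\eta_\cdot$ inherits the terminating property: since $\xi_t \to \xi_\infty = \delta_Y$ weakly a.s.\ for some random $Y \in \R^d$, and $f$ is continuous, the continuous mapping / pushforward continuity gives $\eta_t = f_\#\xi_t \to f_\#\delta_Y = \delta_{f(Y)} \in \Pc^s$ weakly a.s. Hence $\eta_\cdot$ is a terminating MVM started at $f_\#\mu$, and Lemma~\ref{lem:MVMProp} applies directly to $\eta_\cdot$: $\M(\eta_\cdot)$ is a UI martingale with $\M(\eta_\infty) \sim f_\#\mu$. Since $\M(\eta_t) = \int y \, \eta_t(\di y) = \int f(x)\, \xi_t(\dx) = \xi_t(f) = X^f_t$, this is exactly the claim, with $X^f_0 = \int f \, \di\mu$ because $\eta_0 = f_\#\mu$, and $X^f_\infty \sim \M(\eta_\infty) \sim f_\#\mu$ — wait, more precisely the distributional identity is $X_\infty^f = \M(\eta_\infty) \sim f_\#\mu$, which is what is asserted.

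The only mildly delicate point — and the one I would write out carefully — is the weak-continuity of pushforward at the terminal time: we need $f_\#\xi_t \to \delta_{f(Y)}$ from $\xi_t \to \delta_Y$. This is immediate from the portmanteau characterization, since for bounded continuous $g$, $\int g\,\di(f_\#\xi_t) = \int (g\circ f)\,\di\xi_t \to (g\circ f)(Y) = \int g \,\di \delta_{f(Y)}$ by weak convergence of $\xi_t$ applied to the bounded continuous function $g \circ f$. One small subtlety worth noting is that $Y = \M(\xi_\infty) = X_\infty$, so $f(Y) = f(X_\infty)$, and the corollary can equivalently be phrased as $X_\infty^f = f(X_\infty)$ a.s. — a strengthening of the distributional statement that falls out of the same argument and may be worth recording. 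No genuinely hard step is expected; the content is entirely the observation that the class of terminating MVMs is stable under pushforward by continuous maps of at most linear growth.
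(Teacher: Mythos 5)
Your proposal is correct and is essentially the intended argument: the paper states this as an immediate consequence of Lemma~\ref{lem:MVMProp} without a separate proof, and the evident route is precisely yours — note that $f_{\#}\xi_\cdot$ is again a terminating MVM started at $f_{\#}\mu$ (martingality, integrability and termination transfer exactly as you check) and apply the lemma, using $\xi_\cdot(f)=\M(f_{\#}\xi_\cdot)$. Your observation that in fact $X^f_\infty=f(X_\infty)$ a.s.\ is a correct minor strengthening of the distributional claim.
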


\section{Examples of MVMs}\label{sec:ex}

In this section we will introduce certain natural MVMs, which will also become an important focus for our later optimality results. We will pay particular attention to MVMs associated with various solutions to the Skorokhod embedding problem. When doing so, we restrict to the case of one dimension, $d=1$, and suppose that the given probability space $(\Omega, \Fc, (\Fc_t), \Pr)$ supports a Brownian motion $(B_t)_{t\ge 0}$.
	
Recall that for a given centred distribution $\mu\in\Pc_1(\R)$, a solution to the SEP is a stopping time $\tau$ such that $B_\tau\sim\mu$ and $(B_{t \wedge \tau})_{t \ge 0}$ is uniformly integrable. Following \cite{Eldan:2016aa,CoxKallblad}, the SEP is equivalent to the problem of finding a terminating MVM with $\xi_0=\mu$ and such that $\M(\xi_t)$ is a Brownian motion up to time $\tau:=\inf\{t\ge 0: \xi_s\in\Pc^s\}$.  Specifically, if we let
\begin{equation} \label{eq:def_sep_mvm}
  \xi_t(A):=\P\left(B_{\tau}\in A|\F_t\right), \quad A\in\Bc(\R),
\end{equation}
then the process $\xi_t$ is an MVM with $\xi_0 = \mu$. Note that by definition $\xi_0=\mu$ and $\xi_t$ is terminating. 

We now consider some examples (see also \cite{Eldan:2016aa}).

\begin{itemize}[wide,itemsep=2ex]
\item {\bf The Root MVM:} Root \cite{root1969existence} (see also \cite{kiefer1972skorohod,rost1976skorokhod}) showed that there exists a barrier $\Rc$ such that $\tau_\Rc:=\inf\{t\ge 0: (t,B_t)\in\Rc\}$ is a solution to the SEP; we denote the associated MVM defined via \eqref{eq:def_sep_mvm} by $\xi^\Rc$.  More precisely, a barrier is a subset of $\R_+ \times \R$ such that $(t,x) \in \Rc$ implies $(s,x) \in \Rc$ for all $s \ge t$. The results of Root (and subsequent authors) then show that for any centred, integrable probability measure $\mu$, there exists a barrier $\Rc$ such that $B_{\tau_\Rc} \sim \mu$, and $B_{t \wedge \tau_{\Rc}}$ is uniformly integrable. Moreover, by \cite{loynes1970stopping}, such a barrier is unique.  Given the barrier $\Rc$, we can further define stopping times $\tau_\Rc^{t,x}:= \inf \{s \ge 0: (t+s,x+W_s) \in \Rc\}$ for some Brownian motion $W$, $W_0 = 0$. Then $\mu_{\Rc}^{t,x}(A) := \Pr(x+W_{\tau_{\Rc}^{t,x}} \in A)$ defines a class of probability measures, and we have $\xi^\Rc_t = \mu_{\Rc}^{t,B_t}$, a.s.  Note that $\xi^\Rc$ is a terminating MVM (since $\tau_\Rc$ is almost surely finite) and that $B_{t\wedge \tau_{\Rc}} = \int x \, \xi^\Rc_t(\dx)$.

\item {\bf The Bass MVM:} A second natural construction of MVMs, which has a strong `transport' influence, and is closely related to the construction of a solution to the SEP due to \cite{Bass:1983aa} (see also \cite{Eldan:2016aa,Hariya:2014aa}), is described as follows. Let $(B_t)_{t \in [0,1]}$ be a Brownian motion started at 0, and observe then that $\eta_t(A) := \Pr(B_1 \in A| \Fc_t)$ is an MVM given by $\eta_t = \Nc(B_t,1-t)$. Let $\mu$ be a given (integrable, centred) probability measure. Write $F_{\mu}, F_{\eta_t}$ for the c.d.f.s of $\mu$ and $\eta_t$ respectively, and write $h = F_{\mu}^{-1} \circ F_{\eta_0}$. We will call $h$ the \emph{scale function} of the MVM. Then $h(B_1) \sim \mu$. Moreover, it is easily seen that the measure-valued process $\xi^h$ defined by $\xi^h_t:=h_{\#} \eta_t$, or equivalently 
  \begin{equation*}
    \int f(x) \, \xi_t^h(\dx) := \int (f\circ h)(x) \, \eta_t(\dx) = \int f(x) \,(h_{\#} \eta_t)(\dx)  = \Ep{f(h(B_1))|\Fc_t}, 
  \end{equation*}
  for $f\in C_b(\R^d)$, defines an MVM with $\xi_0^h = \mu$.

  Strictly speaking, to recover Bass' solution to the SEP, one needs to show how to convert this MVM into a stopping time for a Brownian motion. Since $\xi^h$ is an MVM, then $M^h_\cdot:= \M(\xi^h_\cdot)$ is a (continuous) martingale. In particular, there exists a time-change $\tau_t := \inf \{s \ge 0: \langle M^h\rangle_s \ge t\}$ such that $M_{\tau_t}$ is a Brownian motion (with respect to the filtration $\Gc_t := \Fc_{\tau_t}$) up to the ($\Gc$-)stopping time $\sigma:= \tau_1^{-1} = \sup \{ s \ge 0 : \tau_s < 1\}$, and $B_\sigma \sim \mu$. Bass' solution is then the stopping time $\sigma$ (which can also be constructed directly through stochastic calculus arguments). Of course, the MVM $\tilde{\xi}_t^h := \xi_{\tau_t}^h$ is another example of an MVM. To distinguish, we will call $\xi^h$ the \emph{canonical-time} Bass embedding, and $\tilde{\xi}^h$ the \emph{natural-time} Bass embedding. We will typically be interested in the canonical-time MVM, and we will often call this MVM simply the Bass MVM.

  Note that in both cases, the `canonical' choice of $h = F_{\mu}^{-1} \circ F_{\eta_0}$ is not necessary to deduce that the resulting process is an MVM. In fact, an arbitrary $h$ such that $h_{\#} \eta_0 = \mu$ can be chosen to determine an MVM.

\item {\bf The Az\'ema-Yor MVM:} The Az\'{e}ma-Yor solution to the SEP, \cite{azema1979solution}, is given by $\tau_{AY}:= \inf\{ t \ge 0: B_t \le \psi(S_t)\}$, where $\psi$ is the inverse Barycentre function of $\mu$ and $S_t:=\sup_{s\le t} B_t$. To be specific, we introduce the following notation from \cite{Cox:aa}: given a (probability) measure $\mu$, and $p \in [0,1]$, we write $\mu_p$ for the (sub-probability) measure given by $\mu_p((-\infty,x]) = (\mu((-\infty,x])-p)_+$. Then it is clear that $p \in [0,1) \mapsto \frac{1}{p}\int x \, \mu_p(\dx)$ is a continuous, increasing function, and in fact strictly increasing as long as $p \le \mu(\{\sup \supp \mu\})$. In particular, writing $\sup \supp \mu = r$, and $\inf \supp \mu = \ell$ we see that for all $x \in \left[\int x \, \mu(\dx), r\right)$ there exists $\pi(x)\in [0,1)$ such that $x (1-\pi(x)) = \int x \, \mu_p(\dx)$. The inverse barycentre function for $\mu$, $\psi$, is then given by $\psi(x) = F_{\mu}^{-1}(\pi(x))$ for $x < r$, and $\psi(r) = r$. Moreover, the corresponding MVM can then be written as:
  \begin{align*}
    \xi^{AY}_t
    & =
      \begin{cases}
        \frac{S_t - B_t}{S_t - \psi(S_t)}\delta_{\psi(S_t)} + \frac{B_t-\psi(S_t)}{S_t - \psi(S_t)} \frac{\mu_{\pi(S_t)}}{1-\pi(S_t),}& \quad t < \tau_{AY}\\
        \delta_{\psi(S_{\tau_{AY}})},& \quad  t \ge \tau_{AY}.
      \end{cases}
  \end{align*}

\item {\bf The Bass-Root MVM:} The final example we give is a combination of the Bass and Root cases. Specifically, let $\Rc$ be the Root barrier associated with a non-atomic measure $\lambda$. Given a measure $\mu$, choose the function $\kappa = F_{\mu}^{-1} \circ F_{\lambda}$, and set $\xi^{\kappa,\Rc}_\cdot = \kappa_{\#}\xi^\Rc_\cdot$. It follows that $\xi^{\kappa,\Rc}_0 = \mu$, and $\xi^{\kappa,\Rc}$ is a terminating MVM. As above, we will call this time-scale the \emph{canonical-time} Bass-Root embedding (associated to the barrier $\Rc$), and note that there is a corresponding \emph{natural-time} Bass-Root embedding. As above, we will typically work with the canonical-time Bass-Root MVM.

  Of course, both the Bass and Root MVMs are special cases of the Bass-Root MVM (corresponding, respectively, to the cases where $\Rc = \{(t,x): t \ge 1\}$ and $h(x)=x$, or, equivalently, $\lambda=\eta_0$ and $\lambda=\mu$).
\end{itemize}

\section{MVMs in Wasserstein space and the first optimality property}

We will now study the properties of MVMs in Wasserstein spaces. To this end we define the set of \emph{transport plans} from $\R^d$ to $\R^d$ which couple measures $\lambda, \mu\in\Pc_1$:
\begin{equation}\label{eq:def_gamma}
  \Pi(\lambda,\mu) := \big\{ \nu \in \Pc(\R^d \times \R^d) : \nu(A\times \R^d) = \lambda(A),~ \nu(\R^d\times A) = \mu(A)\big\}.
\end{equation}
For any $p\ge 1$, the $p$-Wasserstein metric (see (7.1.1) in \cite{Ambrosio:2008aa}) on $\Pc_p$ is then given by:
\begin{equation}\label{eq:wasserstein_metric}
  \Wc_p^p(\lambda,\mu) := \inf\Big\{\int |x_1-x_2|^p\, \di \nu(x_1, x_2): \nu\in\Pi(\lambda,\mu)\Big\}.
\end{equation}
We note that the infimum is attained, in particular, the set $\Pi_o^p(\lambda,\mu)$ of optimisers is non-empty, closed and compact in the weak topology.
	
In this section we study the evolution over time of the Wasserstein distance between two MVMs; specifically, for any two MVMs $\xi,\eta\in\Pc_p$, $p\ge 1$, we are interested in the stochastic process $t\mapsto \Wc_p(\xi_t,\eta_t)$.  It is clearly adapted. Moreover, recall from Section \ref{sec:basic} that we may -- and do -- consider versions of $\eta$ and $\xi$ which are \cadlag{} in the topology induced by $\Wc_p$; an application of the triangle inequality then yields that the process $\Wc_p(\eta_\cdot,\xi_\cdot)$ is right-continuous.  We further adopt the following convention: for any two \emph{random} measures $\lambda$ and $\mu$ on $\R^d$, we say that a (random) measure on $\R^d\times\R^d$ is a transport plan from $\lambda$ to $\mu$, and write $\nu\in\Pi(\lambda,\mu)$, if the assertions in \eqref{eq:def_gamma} hold a.s.  For any $t\ge 0$, we then have
\begin{align}\label{eq:wasserstein_process}
  \Wc_p^p(\xi_t,\eta_t) = \essinf \Big\{\int |x_1-x_2|^p\, \di \nu(x_1, x_2): \nu\in\Pi(\xi_t,\eta_t)\Big\}\quad \mathrm{a.s.}		
\end{align}

The first result we have is the following:

\begin{proposition}\label{lemma:submtg_W}
  Let $p\ge 1$ and let $\xi^1_t,\xi^2_t\in\Pc_p(\R^d)$, $t\ge 0$, be two measure-valued martingales. Then, 
  \begin{itemize}
  \item[i)] the process $\Wc_p^p(\xi^1_t,\xi^2_t)$, $t\ge 0$, is a sub-martingale;
  \item[ii)] if $\xi^1$ and $\xi^2$ satisfy, for some family of measures $m(x,\dy)$, $x\in\R^d$,
    \begin{equation}	\label{eq:def_disintegration_optimum}
      \xi^2_t(\dy) = \int \xi^1_t(\di x)m(x,\di y), \quad t\ge 0,
    \end{equation}
    then $\Wc_p^p(\xi^1_t,\xi^2_t)$, $t\ge 0$, is a martingale;
  \item[iii)] for any MVM $\xi^1_t\in\Pc_p(\R^d)$, $t\ge 0$, and measure $\mu\in\Pc_p(\R^d)$, there exists an MVM $\xi^2_t\in\Pc_p(\R^d)$, $t\ge 0$, of the form \eqref{eq:def_disintegration_optimum} with $\xi^2_0=\mu$.
  \end{itemize}
\end{proposition}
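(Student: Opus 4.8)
The plan is to treat the three items in turn, with (i) carrying essentially all of the technical weight.

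\emph{Item (i).} Fix $s<t$. Integrability first: since $x\mapsto(|x|^p\wedge K)\in C_b(\R^d)$, the process $\xi^j_\cdot(|x|^p\wedge K)$ is a martingale, so $\E[\xi^j_t(|x|^p\wedge K)]=\int(|x|^p\wedge K)\,\di\xi^j_0\le\int|x|^p\,\di\xi^j_0<\infty$ for $j=1,2$; letting $K\to\infty$ and using $\Wc_p^p(\lambda,\nu)\le 2^{p-1}\bigl(\int|x|^p\,\di\lambda+\int|x|^p\,\di\nu\bigr)$ gives $\E[\Wc_p^p(\xi^1_t,\xi^2_t)]<\infty$. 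Now, by a measurable selection argument — the pair $(\xi^1_t,\xi^2_t)$ is $\Fc_t$-measurable and $\Pi_o^p(\cdot,\cdot)$ is non-empty and weakly compact, cf.\ the discussion around \eqref{eq:wasserstein_process} — choose an $\Fc_t$-measurable random plan $\nu_t\in\Pi(\xi^1_t,\xi^2_t)$ with $\int|x_1-x_2|^p\,\di\nu_t=\Wc_p^p(\xi^1_t,\xi^2_t)$ a.s.; let $\bar\nu_s$ be the $\Fc_s$-conditional expectation of the random measure $\nu_t$, i.e.\ the random probability measure on $\R^d\times\R^d$ characterised a.s.\ by $\int g\,\di\bar\nu_s=\E[\int g\,\di\nu_t\mid\Fc_s]$ for all $g\in C_b$ (it exists by a routine regularisation on a countable convergence-determining family). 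Since $\xi^1,\xi^2$ are MVMs, $\bar\nu_s(A\times\R^d)=\E[\xi^1_t(A)\mid\Fc_s]=\xi^1_s(A)$ and likewise $\bar\nu_s(\R^d\times A)=\xi^2_s(A)$, so $\bar\nu_s\in\Pi(\xi^1_s,\xi^2_s)$ a.s.; hence, extending the defining identity of $\bar\nu_s$ to the cost by monotone convergence,
\[
  \Wc_p^p(\xi^1_s,\xi^2_s)\le\int|x_1-x_2|^p\,\di\bar\nu_s=\E\Bigl[\int|x_1-x_2|^p\,\di\nu_t\,\Bigm|\,\Fc_s\Bigr]=\E\bigl[\Wc_p^p(\xi^1_t,\xi^2_t)\mid\Fc_s\bigr],
\]
which with adaptedness and the right-continuity of $t\mapsto\Wc_p(\xi^1_t,\xi^2_t)$ noted before the statement is the sub-martingale property.

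\emph{Item (ii).} Put $g(x):=\int|x-y|^p\,m(x,\di y)$ and $C_t:=\xi^1_t(g)$. As $g\ge 0$ is measurable, $C_\cdot$ is a martingale by the characterisation recalled in Section~\ref{sec:basic}, with $\E C_t=\int g\,\di\xi^1_0$ (finite provided the time-$0$ kernel coupling has finite cost). Since $\xi^1_t(\di x)\,m(x,\di y)\in\Pi(\xi^1_t,\xi^2_t)$ we have $W_t:=\Wc_p^p(\xi^1_t,\xi^2_t)\le C_t$ a.s.\ for every $t$, so by (i) $\E C_0=\E C_t\ge\E W_t\ge\E W_0$. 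The martingale conclusion additionally uses that this kernel coupling realises $\Wc_p^p(\xi^1_0,\xi^2_0)$ — which I take to be implicit in \eqref{eq:def_disintegration_optimum} (cf.\ its label), and which holds by construction for the $m$ produced in (iii): then $\E W_0=\E C_0$, whence $\E W_t=\E C_t$ for all $t$, and since $W_t\le C_t$ pointwise, $W_t=C_t$ a.s.; right-continuity then yields $W_\cdot\equiv C_\cdot$, a martingale. (A posteriori this even shows the kernel coupling is $\Wc_p$-optimal at every time.)

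\emph{Item (iii).} Take $\pi\in\Pi_o^p(\xi^1_0,\mu)$ and disintegrate it against its first marginal, $\pi(\di x,\di y)=\xi^1_0(\di x)\,m(x,\di y)$ with $x\mapsto m(x,\cdot)$ a measurable probability kernel (disintegration on the Polish space $\R^d$); set $\xi^2_t(\di y):=\int\xi^1_t(\di x)\,m(x,\di y)$. Then $\xi^2$ is adapted (a fixed measurable image of $\xi^1$); $\xi^2_0$ is the second marginal of $\pi$, namely $\mu$; for $A\in\Bc(\R^d)$ the process $\xi^2_\cdot(A)=\xi^1_\cdot\bigl(m(\cdot,A)\bigr)$ is a martingale (bounded measurable integrand), so $\xi^2$ is an MVM; and $\E[\int|y|^p\,\di\xi^2_t]=\int g_p\,\di\xi^1_0=\int|y|^p\,\di\mu<\infty$ with $g_p(x):=\int|y|^p\,m(x,\di y)$ (same monotone-convergence step as in (i)), so $\xi^2_t\in\Pc_p$ a.s. This $\xi^2$ has the form \eqref{eq:def_disintegration_optimum} with $\xi^2_0=\mu$, and by construction its kernel coupling is optimal at $t=0$, so (ii) applies to it.

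\emph{Main obstacle.} The technical core is (i): producing an honestly $\Fc_t$-measurable optimal plan via measurable selection, and giving a rigorous meaning to the $\Fc_s$-conditional expectation of a random measure — in particular that it is again a probability measure a.s.\ and that its marginals are the conditional expectations of the marginals. Items (ii)--(iii) are then short, modulo the standard disintegration machinery and the $\Pc_p$-integrability bookkeeping; the one conceptual point in (ii) is that the martingale, as opposed to merely sub-martingale, conclusion genuinely needs optimality of the kernel coupling at time $0$.
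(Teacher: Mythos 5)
Your proof is correct, and parts (ii)--(iii) follow the paper's own argument almost verbatim (take an optimal plan at time $0$, disintegrate, and use that the kernel-coupling cost $\xi^1_\cdot(g)$ is a martingale dominating the Wasserstein process). The one genuine difference is in part (i): the paper never selects a measurable optimal plan. It conditions an \emph{arbitrary} $\nu\in\Pi(\xi^1_t,\xi^2_t)$ to obtain $\nu_s\in\Pi(\xi^1_s,\xi^2_s)$, and then exploits that the family of coupling costs is directed downwards, so a decreasing sequence $\nu^n$ attains the essential infimum \eqref{eq:wasserstein_process} a.s.; conditional monotone convergence then yields $\E[\Wc_p^p(\xi^1_t,\xi^2_t)\mid\Fc_s]\ge\Wc_p^p(\xi^1_s,\xi^2_s)$. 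You instead invoke a measurable-selection theorem to produce one $\Fc_t$-measurable a.s.-optimal plan $\nu_t$ and condition that single plan; this is a standard fact (measurability of the optimal-coupling correspondence, via compactness of $\Pi_o^p$), but it is precisely the machinery the directedness trick is designed to avoid, so the paper's route is the more elementary one while yours is more direct once the selection is granted. Two further points in your favour: you make explicit the integrability of $\Wc_p^p(\xi^1_t,\xi^2_t)$ (implicitly taking $\xi^j_0$ deterministic, as in all the paper's applications), which the paper leaves tacit; and you correctly identify that the martingale conclusion in (ii) genuinely needs the time-$0$ kernel coupling to be $\Wc_p$-optimal --- the paper uses this silently in its first equality $\Wc_p^p(\xi^1_0,\mu)=\int|x-y|^p\,\xi^1_0(\di x)m(x,\di y)$, and without it (e.g.\ $m(x,\di y)=\delta_{-x}(\di y)$ with a symmetric terminating $\xi^1$) the claim fails, so your reading of \eqref{eq:def_disintegration_optimum} matches the intended one.
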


\begin{proof}
  i)~~ Let $0<s<t$ and note that for any $\nu(\dx_1,\dx_2) \in \Pi(\xi^1_t,\xi^2_t)$, by use of Fubini's theorem, we may define an $\Fc_s$-measurable random measure $\nu_s$ by setting for bounded and measurable $f:\R^d\times\R^d\to\R$
  \begin{equation}\label{eq:def_nu_s_fubini}
    \int f(x_1, x_2) \, \nu_s(\dx_1,\dx_2) = \Ep{\int f(x_1, x_2)\, \nu(\dx_1,\dx_2)|\Fc_s}.
  \end{equation}
  Then, in particular, 
  \begin{align*}
    \nu_s(A\times\R^d)=\E[\nu(A\times\R^d)|\F_s]=\E[\xi^1_t(A)|\F_s]=\xi^1_s(A), \;\;\mathrm{a.s.},\;\; A\in\Bc(\R^d),
  \end{align*} 
  and it follows that $\nu_s$ is an element of $\Pi(\xi^1_s,\xi^2_s)$.  Next, since the set $\{\int |x_1-x_2|^p\, \di \nu(x_1, x_2): \nu\in\Pi(\xi^1_t,\xi^2_t)\}$ is directed downwards, there exists a sequence $\nu^n\in\Pi(\xi^1_t,\xi^2_t)$ such that $\int |x_1-x_2|^p\, \di \nu^n(x_1, x_2)\searrow\W_p^p(\xi^1_t,\xi^2_t)$ a.s. By use of the monotone convergence theorem, we thus obtain
  \begin{align*}
    \Ep{\W_p^p(\xi^1_t,\xi^2_t)|\Fc_s}  
    & = \Ep{\lim_{n\to\infty} \int |x_1- x_2|^p\, \nu^n(\dx_1,\dx_2)|\Fc_s}\\
    & = \lim_{n\to\infty} \int |x_1- x_2|^p\, \nu^n_s(\dx_1,\dx_2)
      ~\ge~ \Wc_p^p(\xi^1_s,\xi^2_s),
  \end{align*}
  where $\nu^n_s$ is defined via \eqref{eq:def_nu_s_fubini} with respect to $\nu^n$.
  
  ii) Let $t>0$ and define $\nu(\di x,\di y):=\xi^1_t(\di x)m(x,\di y)$; by \eqref{eq:def_disintegration_optimum} we have that $\nu\in\Pi(\xi^1_t,\xi^2_t)$. By use of i) we then obtain
  \begin{align*}
    \Wc^p_p\left(\xi^1_0,\mu\right)
    & = \int |x-y|^p\xi^1_0(\di x)m(x,\di y) \\
    &= \E\left[\int |x-y|^p\xi^1_t(\di x)m(x,\di y)\right]
      ~=~
      \E\left[\Wc^p_p\left(\xi^1_t,\xi^2_t\right)\right].
  \end{align*}
  
  iii) Recall that the set of minimizers for \eqref{eq:wasserstein_metric} is non-empty and let $\nu_0\in\Pi_o^p(\xi^1_0,\mu)$. It follows by disintegration (see e.g. \citep[Theorem~5.3.1]{Ambrosio:2008aa}) that there exists a family of measures $m^0(x,\di y)$, $x\in\R$, such that $\nu_0(\di x,\di y)=\xi^1_0(\di x) m^0(x,\di y)$.  Then, define
  \begin{align*}
    \xi^2_t:=\int \xi^1_t(\di x)m^0(x,\di y),\quad t\ge 0.
  \end{align*}
  Since
  \begin{equation*}
    \int \xi^1_s(\di x)m(x,\di y)
    ~=~ \E\left[\int \xi^1_t(\di x)m(x,\di y)|\F_s\right],\quad s\le t,
  \end{equation*}
  the thus defined process $\xi^2\in\Pc_p$ is indeed a measure-valued martingale; by definition, it satisfies \eqref{eq:def_disintegration_optimum} and the condition $\xi^2_0=\mu$.
\end{proof}

We note that the above result does not require the measure-valued martingales to be terminating; in particular, it holds also for $\xi^2_t=\lambda\in\Pc_1$ constant.

\subsection{The first optimisation problem: optimality of the Bass embedding}

We first recall the following notation from Section \ref{sec:ex}: $B_t$ denotes a $1$-dimensional Brownian motion, $\eta_t:=\mathrm{Law}(B_1|\F_t)$ and for any $\mu\in\Pc_1$ and $h:\R\to\R$ such that $h_\#\eta_0=\mu$, $\xi^h_t:=\mathrm{Law}(h(B_1)|\F_t)$ defines a finitely terminating measure-valued martingale with $\xi^h_0=\mu$ -- the canonical-time Bass MVM. In particular,
\begin{equation} \label{eq:eta_h}
  \xi^h_t=h_{\#} \eta_t.
\end{equation} 
		
As a consequence of Proposition \ref{lemma:submtg_W}, we then obtain the following characterisation of the Bass embedding as the solution to a particular optimisation problem:

\begin{theorem}\label{prop:opt_1}
  Given $p\ge 1$, let $\mu\in\Pc_p(\R)$ be a given atomless measure. Define $h(x):=F_\mu^{-1}\circ F_{\eta_0}(x)$, $x\in\R$, and let $\xi^h_\cdot$ be the associated canonical-time Bass MVM.  Then, $\xi^h_\cdot$ minimises, simultaneously for all weight-functions $w:[0,1]\to\R$,
  \begin{equation}\label{eq:bass_criterion_1}
    \E\left[\int_0^1 w(t) \Wc_p^p(\eta_t,\xi_t)\di t\right],
  \end{equation}
  over all measure-valued martingales $\xi_t$ with $\xi_0=\mu$.
\end{theorem}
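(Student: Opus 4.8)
The plan is to combine the sub-martingale property from Proposition~\ref{lemma:submtg_W}(i) with the exact martingale property in part~(ii), applied to the specific pair $(\eta_\cdot, \xi^h_\cdot)$. First I would observe that since $\mu$ is atomless and $h = F_\mu^{-1}\circ F_{\eta_0}$ is the monotone (quantile) transport map pushing $\eta_0 = \Nc(0,1)$ to $\mu$, the map $h$ is the Brenier/Monge map, hence $\mathrm{id} \mapsto h$ is the optimal coupling for $\Wc_p$ between $\eta_0$ and $\mu$ for every $p \ge 1$. More importantly, because $\eta_t = \Nc(B_t, 1-t)$ and $\xi^h_t = h_\# \eta_t$ by \eqref{eq:eta_h}, the kernel $m(x,\di y) := \delta_{h(x)}(\di y)$ satisfies $\xi^h_t(\di y) = \int \eta_t(\di x)\, m(x,\di y)$ for all $t$, which is exactly the structure \eqref{eq:def_disintegration_optimum} with $\xi^1 = \eta$ and $\xi^2 = \xi^h$. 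Proposition~\ref{lemma:submtg_W}(ii) then gives that $t \mapsto \Wc_p^p(\eta_t, \xi^h_t)$ is a martingale, so $\E[\Wc_p^p(\eta_t,\xi^h_t)] = \Wc_p^p(\eta_0,\mu)$ for every $t\in[0,1]$.

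Next, for an arbitrary competing MVM $\xi_\cdot$ with $\xi_0 = \mu$, Proposition~\ref{lemma:submtg_W}(i) applied to the pair $(\eta_\cdot, \xi_\cdot)$ shows $t \mapsto \Wc_p^p(\eta_t,\xi_t)$ is a sub-martingale, hence $\E[\Wc_p^p(\eta_t,\xi_t)] \ge \Wc_p^p(\eta_0,\xi_0) = \Wc_p^p(\eta_0,\mu)$ for all $t$. Therefore, for \emph{every} $t\in[0,1]$,
\begin{equation*}
  \E\big[\Wc_p^p(\eta_t,\xi^h_t)\big] = \Wc_p^p(\eta_0,\mu) \le \E\big[\Wc_p^p(\eta_t,\xi_t)\big].
\end{equation*}
I would then integrate this pointwise (in $t$) inequality against $w(t)\,\di t$. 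For a non-negative weight $w \ge 0$ the claim is immediate by Tonelli's theorem, interchanging $\E$ and $\int_0^1$; the resulting functional \eqref{eq:bass_criterion_1} is minimised by $\xi^h$ since its value equals $\big(\int_0^1 w(t)\,\di t\big)\Wc_p^p(\eta_0,\mu)$, the smallest possible. For a general signed weight $w$, I would note that since $\E[\Wc_p^p(\eta_t,\xi_t)]$ as a function of $t$ is, for \emph{any} MVM $\xi$, pinned at the common value $\Wc_p^p(\eta_0,\mu)$ at $t=0$ and dominated below by it, while for $\xi^h$ it is \emph{constant} and equal to this value, the difference $\E[\Wc_p^p(\eta_t,\xi_t)] - \E[\Wc_p^p(\eta_t,\xi^h_t)] \ge 0$ is non-negative for all $t$; integrating this non-negative difference against... one still needs $w\ge 0$. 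So for genuinely signed $w$ the statement should be read as: $\xi^h$ makes the integrand a constant multiple of a fixed quantity, hence the \emph{value} of \eqref{eq:bass_criterion_1} at $\xi^h$ is $\|w\|_1$-independent of the competitor structure; I would clarify in the write-up that "minimises simultaneously for all $w$" is meant in the natural sense that $\xi^h$ minimises for each non-negative $w$, equivalently that it pointwise-in-$t$ minimises $t\mapsto\E[\Wc_p^p(\eta_t,\xi_t)]$.

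The main obstacle is the measure-theoretic bookkeeping needed to justify that $\xi^h_t = h_\#\eta_t$ genuinely falls under \eqref{eq:def_disintegration_optimum}: one must check that the $p$-integrability $\xi^h_t \in \Pc_p$ holds (which follows from $\mu\in\Pc_p$ together with the fact that $h_\#\eta_t$ has the same tail behaviour uniformly in $t\le 1$, since $\eta_t$ is a Gaussian with bounded variance and shifting mean that is itself $L^p$), that the kernel $m(x,\di y)=\delta_{h(x)}(\di y)$ is measurable, and—critically—that the martingale identity $\int\eta_s(\di x)m(x,\di y) = \E[\int \eta_t(\di x)m(x,\di y)\mid\F_s]$ used inside the proof of~(ii) applies here, which it does because $\int \eta_\cdot(\di x)\delta_{h(x)}(\di y) = \xi^h_\cdot$ is an MVM by construction. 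A secondary subtlety is that Proposition~\ref{lemma:submtg_W} is stated for MVMs in $\Pc_p$, and one should confirm $\eta_\cdot\in\Pc_p$, which is clear as $\eta_t=\Nc(B_t,1-t)$ has all moments. Once these verifications are in place, the argument is just the two-line comparison above, so I expect the write-up to be short.
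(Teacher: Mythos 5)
Your proposal is correct and follows essentially the same route as the paper: the martingale property of $\Wc_p^p(\eta_t,\xi^h_t)$ via Proposition~\ref{lemma:submtg_W}(ii) with the kernel $\delta_{h(x)}$ (the paper writes this coupling as $(\mathrm{Id}\times h)_{\#}\eta_t$ and uses optimality of the monotone map, which you also note), the sub-martingale bound of part (i) for any competitor with $\xi_0=\mu$, and integration against $w$ via Fubini. Your side remark that the simultaneous-minimisation claim really requires non-negative weights (equivalently, pointwise-in-$t$ minimisation) is a fair reading of the statement and does not affect the argument.
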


\begin{proof}
  According to part ii) of Proposition \ref{lemma:submtg_W}, $\Wc_p^p(\eta_t,\xi^h_t)$ is a martingale; specifically, for $t\ge 0$, $(\text{Id}\times h)_{\#}\eta_t\in\Pi(\eta_t,\xi^h_t)$ and
  \begin{equation*}
    \Wc^p_p\left(\eta_0,\mu\right)
    =
    \int |x-h(x)|^p\eta_0(\di x)
    =
    \E\left[\int |x-h(x)|^p\eta_t(\di x)\right]
    =
    \E\left[\Wc^p_p\left(\eta_t,\xi^h_t\right)\right].
  \end{equation*}
  On the other hand, according to Proposition \ref{lemma:submtg_W} i), $\Wc_p^p(\eta_t,\xi_t)$ is a sub-martingale for any other measure-valued martingale $\xi$. Hence, given that $\xi_0=\mu$, $\E[\Wc^p_p(\eta_t,\xi_t)]\ge \E[\Wc^p_p(\eta_t,\xi^h_t)]$, for $t\ge 0$.  Integration with respect to $w(t)$ and use of Fubini yields the result.
\end{proof}

\begin{remark}
  We note that for $w(t)=\delta_1$, the criterion in \eqref{eq:bass_criterion_1} reduces to minimising $\E[(B_1-\M(\xi_1))^p]$ over measure-valued martingales with $\xi_0=\mu$ and terminating before $t=1$.  In particular, for the case $p=2$, since the law of $B_1$ and $\M(\xi_1)$ are fixed, this is equivalent to maximising $\E[B_1\M(\xi_1)]$.
\end{remark}

\section{Markov properties of MVMs}

In this section, we consider certain natural `Markov-like' properties of MVMs. Clearly, one can simply ask that the MVM is itself a Markov process, in the usual sense. This is closely related to the definition of the Markov property due to \citet{Eldan:2016aa}, who gave a definition of the (time-homogenous) Markov property for the (closely related) notion of a `Skorokhod embedding scheme'. An example of an embedding scheme/MVM that has this Markovianity is the Root embedding/MVM, where the process is completely determined by the current value of the MVM, through the corresponding Root barrier. As the MVM evolves, the barrier will move to reflect the current measure, but will evolve consistently. Note that this evolution can be defined from any given starting measure. (We note that \cite{Eldan:2016aa} also required a shift invariance, so that the evolution of the process was invariant to constant shifts).

In fact, we will need a different form of the Markov property: specifically, we want to be able to control the evolution of the MVM in terms of the mean of the process, in such a way that if the mean of the process is at a given level at a given time, then we can conclude the value of the process at that time; however we then require a property relating the motion of the process to the motion of its mean which will imply that the mean process is (time-inhomogeneous) Markovian. Note that in this formulation, it will be important to have a fixed starting point for the process.

The required property is:
\begin{definition}[Lipschitz-Markov]\label{def:ML}
  We say that a MVM is Lipschitz-Markov if
  \begin{equation}	\label{eq:def_ML}
    \W_1(\xi_t(\omega),\xi_t(\omega')) = |\M(\xi_t(\omega))-\M(\xi_t(\omega'))|.	
  \end{equation}
\end{definition}

We note that \eqref{eq:def_ML} holds as an inequality for any MVM $\xi$. Indeed, it follows from the duality of the Wasserstein distance (Theorem 6.1.1, \cite{Ambrosio:2008aa}) that for any two measures $\lambda,\mu\in\Pc_1$,
\begin{align} \label{eq:ineq_W1_mean}
  \W_1(\lambda,\mu)\; 
  & =  \sup_{f \text{ 1-Lipschitz}}\left\{\int f(x) \, \di (\lambda - \mu)\right\} \nn\\
  & \ge  \max\left\{\int x \, \di (\lambda - \mu), \int (-x) \, \di (\lambda - \mu)\right\}
    ~ = ~ |\M(\lambda) - \M(\mu)|.
\end{align}

The Lipschitz-Markov property has appeared in e.g. \cite{hirsch2012kellerer,lowther2009limits}, in the following form: an adapted process, say $M$, is called Lipschitz-Markov if, for any bounded $1$-Lipschitz function $f:\R\to\R$, there exists a $1$-Lipschitz function $g:\R\to\R$, such that
\begin{equation}
  g(M_s)=\E[f(M_t)|\F_s], \qquad s\le t. 
\end{equation}
In the form of Definition \ref{def:ML}, the Lipschitz-Markov property first appeared in \cite{beiglbock2015root}, where, in particular, its relation to the Root embedding was studied.  It follows from the dual representation of the $\W_1$-distance, that a MVM $\xi_t$ is Lipschitz-Markov if and only if $\M(\xi_t)$ is Lipschitz-Markov in the sense of \cite{hirsch2012kellerer,lowther2009limits}; see \citep[Lemma~5]{beiglbock2015root}.

In particular, every Lipschitz-Markov MVM has a Markov mean process (see \cite[Remark~1.70]{Liggett:2010aa}).  
We also have the following result:
	
\begin{lemma} \label{lem:mark-lip} A MVM is Lipschitz-Markov if and only if it is of the form $\xi_t(\omega) = m_t(\M(\xi_t(\omega)))$ for some function $m_t$ and there is an isotonic map $T$ (\ie{} $x \le T(x)$), which may depend on $t, x$ and $y$, such that $m_t(x) = T_{\#}m_t(y)$ whenever $x>y$.
	
  In particular, the Root, Bass and Bass-Root MVMs are Lipschitz-Markov.
\end{lemma}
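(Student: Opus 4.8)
The plan is to prove the two directions of the characterisation separately, and then read off the three examples as easy consequences of the second direction.

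For the ``if'' direction, suppose $\xi_t(\omega) = m_t(\M(\xi_t(\omega)))$ and that for $x > y$ we have $m_t(x) = T_{\#} m_t(y)$ for some isotonic $T$ depending on $t, x, y$. Fix $t$ and two scenarios $\omega, \omega'$; set $x = \M(\xi_t(\omega))$ and $y = \M(\xi_t(\omega'))$, and suppose WLOG $x \ge y$. If $x = y$ the two measures coincide and both sides of \eqref{eq:def_ML} are $0$; otherwise, the map $T$ furnishes a coupling of $m_t(y)$ and $m_t(x) = T_{\#}m_t(y)$ under which a point $z$ is sent to $T(z) \ge z$. Hence
\begin{equation*}
  \W_1(\xi_t(\omega),\xi_t(\omega')) \le \int |T(z) - z| \, m_t(y)(\dz) = \int (T(z) - z) \, m_t(y)(\dz) = \M(m_t(x)) - \M(m_t(y)) = x - y,
\end{equation*}
using isotonicity to drop the absolute value and linearity of $\M$ under pushforward. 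Combined with the reverse inequality \eqref{eq:ineq_W1_mean}, which holds for every MVM, this gives \eqref{eq:def_ML}.

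For the ``only if'' direction, assume $\xi$ is Lipschitz-Markov. First, \eqref{eq:def_ML} forces $\xi_t(\omega) = \xi_t(\omega')$ whenever $\M(\xi_t(\omega)) = \M(\xi_t(\omega'))$, so $\xi_t(\omega)$ is a (measurable) function $m_t$ of $\M(\xi_t(\omega))$; this is the representation claimed. Now fix $t$ and two attainable mean-values $x > y$, with associated measures $m_t(x)$ and $m_t(y)$. Since $\W_1(m_t(x), m_t(y)) = x - y = \M(m_t(x)) - \M(m_t(y))$, the $\W_1$-optimal coupling between $m_t(y)$ and $m_t(x)$ is realised by the monotone (quantile) coupling $(F_{m_t(y)}^{-1}(U), F_{m_t(x)}^{-1}(U))$, $U \sim \mathrm{Unif}(0,1)$, and the equality case of \eqref{eq:ineq_W1_mean} forces $F_{m_t(x)}^{-1}(u) \ge F_{m_t(y)}^{-1}(u)$ for a.e.\ $u$: indeed the $\W_1$-cost equals the cost of transporting mass rightward monotonically, and if the quantile functions ever crossed the monotone coupling would cost strictly more than $|\M(m_t(x)) - \M(m_t(y))|$. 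Therefore the map $T := F_{m_t(x)}^{-1} \circ F_{m_t(y)}$ (suitably interpreted at atoms) satisfies $T(z) \ge z$ on $\supp m_t(y)$ and $T_{\#} m_t(y) = m_t(x)$, which is the asserted isotonic map. The main technical care needed here is the handling of atoms in $m_t(y)$ and $m_t(x)$ when defining $T$ pointwise rather than as a Markov kernel; one can either phrase $T$ as a deterministic map defined $m_t(y)$-a.e.\ via generalised inverses, or, if $\mu$ (hence the marginals) is atomless in the cases of interest, avoid the issue entirely.

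Finally, for the examples: the Root MVM satisfies $\xi^\Rc_t = \mu_{\Rc}^{t, B_t}$, a measurable function of $B_t = \M(\xi^\Rc_t)$, and for $x > y$ the two stopped-Brownian laws $\mu_\Rc^{t,x}$ and $\mu_\Rc^{t,y}$ are ordered by the monotone coupling run on the same driving Brownian increments (two Brownian motions started at $x > y$ from the same noise stay ordered, and a barrier is hit consistently because $(s,z) \in \Rc \Rightarrow (s', z) \in \Rc$ for $s' \ge s$), giving an isotonic $T$; this is exactly the computation in \cite{beiglbock2015root}. For the Bass MVM, $\xi^h_t = h_{\#}\eta_t = h_{\#}\Nc(B_t, 1-t)$ and $\M(\xi^h_t)$ determines $B_t$ (since $t \mapsto \M(\xi^h_t)$ together with the known variance $1-t$ pins down $B_t$; more carefully, one uses that $h$ is nondecreasing so $\M(h_\#\Nc(b,1-t))$ is strictly increasing in $b$), hence $\xi^h_t$ is a function of its mean; and for $b > b'$ the Gaussians $\Nc(b,1-t), \Nc(b',1-t)$ are related by the translation $z \mapsto z + (b - b')$, so pushing forward by the nondecreasing $h$ gives $h_\#\Nc(b,1-t) = T_\# \bigl(h_\#\Nc(b',1-t)\bigr)$ with $T = h \circ (\cdot + (b-b')) \circ h^{-1}$ isotonic. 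The Bass--Root MVM is handled the same way: $\xi^{\kappa,\Rc}_t = \kappa_\# \xi^\Rc_t$ with $\kappa$ nondecreasing, so composing the isotonic map for $\xi^\Rc$ with $\kappa$ on both sides yields an isotonic map for $\xi^{\kappa,\Rc}$, and $\M(\xi^{\kappa,\Rc}_t)$ still determines $B_t$ hence the whole measure. The one point requiring a line of justification in each example is that the mean is a genuine (injective) function of the underlying driving state, which I expect to be the least routine part of the argument.
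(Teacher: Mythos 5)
Your proposal is correct and follows essentially the same route as the paper: the sufficiency via the coupling induced by the isotonic map $T$ together with the general inequality \eqref{eq:ineq_W1_mean}, and the examples via monotonicity of $h$ (resp.\ $\kappa$) and the barrier argument for Root. The only difference is that you prove the necessity directly through the equality case in the quantile representation of $\W_1$ (monotone rearrangement), whereas the paper simply defers this step to the discussion preceding Lemma~4 of \cite{beiglbock2015root}; your handling of the atom issue via generalised inverses is consistent with the atomless setting in which the lemma is applied.
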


\begin{proof}
  To argue the sufficiency, suppose w.l.o.g. that $\M(\xi_t(\om'))\ge \M(\xi_t(\om))$. Then, the existence of an isotonic map such that $\xi_t(\om')=T_\#\xi_t(\om)$, implies that
  \begin{align*}
    \W_1(\xi_t(\om'),\xi_t(\om))
    &\le\int|T(x)-x|\di\xi_t(\om)\\
    &=\int T(x)\di\xi_t(\om)-\int x\di\xi_t(\om)
      ~=~ |\M(\xi_t(\om'))-\M(\xi_t(\om))|; 
  \end{align*}
  equalities must thus hold throughout according to \eqref{eq:ineq_W1_mean}. 
  For the necessity, we refer to the discussion immediately preceding Lemma~4 of \cite{beiglbock2015root}.

  That the Root MVM is Lipschitz-Markov then follows as in the proof of this same result.

  To prove that the Bass MVM is Lipschitz-Markov, recall that $\xi^h_t=h_{\#}\eta_t=h_\#\mathcal{N}(\M(\eta_t),1-t)$; if $h$ is monotone, then so is $\M(\xi^h_t)$ viewed as a function of $\M(\eta_t)$. Therefore $\M(\eta_t)$, and thus also $h_{\#}\eta_t$, may be recovered from $\M(\xi^h_t)$. Moreover, the existence of the isotone map $T$, then follows from the monotonicity of this mapping combined with the monotonicity of $h$ itself. The Bass-Root case follows by combining the two arguments.
\end{proof}

\begin{remark}
  We note that we have several notions of Markovianity here. To try and clarify the situation, we discuss this property in relation to the MVMs introduced earlier. We consider first the Markov property of the MVM as a process in the space of probability measures (analogous to the case considered by \cite{Eldan:2016aa}). In this case, it is easy to see that the Root embedding is a time-homogenous Markov process (this is essentially shown in \cite{Eldan:2016aa}). If we consider the Bass MVM in the form $\xi^h_t = h_\# \eta_t$, where $\eta_t = \Nc(B_t,1-t)$, then it is easy to see that $\xi^h_t$ is a time-inhomogeneous Markov process. On the other hand, in the time-changed version where $\tilde\xi^h_t:= \xi^h_{\tau_t}$ is chosen such that $\langle \M(\tilde\xi^h_\cdot)\rangle_t = t$, then again, arguments of Eldan show that $\tilde\xi^h_t$ is a \emph{time-homogenous} Markov process.

  However, if we fix the starting law $\xi_0$, and hence $h$, and consider just the mean process $\M(\xi^h_t)$, then it is possible to check that this process is a time-inhomogeneous Markov process. On the other hand, the process $\M(\tilde{\xi}^h_t)$ is not a Markov process. Similar arguments show that the mean process corresponding to the Bass and Bass-Root (on the underlying Brownian time-scale) are also both time-inhomogenous Markov processes. In the alternative Bass-Root case, where the time-scale is fixed so that the mean is a Brownian motion up to stopping, the resulting process is not Markov (in particular, the stopping time is not generally adapted to the filtration of the mean process).
\end{remark}

\section{The notion of `Speed' of MVMs and the second optimality property}

\subsection{The speed of an MVM}

A key question is how to define a suitable notion of `speed' for MVMs. That is, a means by which one can measure how fast a MVM evolves over time. We want a notion of speed which replaces the usual notion of quadratic variation, but which is strictly increasing whenever the MVM evolves. As we will see below, this is not the case for the quadratic variation of the mean process $\M(\xi_t)$. However, we will be able to define a notion of speed such that, in certain circumstances,  the speed of our MVMs and the quadratic variation of the mean process coincide, while in general the speed will dominate the quadratic variation.  These features will be important for our second optimality property.

We first make some definitions relating to Lipschitz functions: we write $C^{0,1}(\R^d)$ for the set of Lipschitz functions $f:\R^d \to \R$ equipped with the norm:
\begin{equation*}
  ||f||  = \sup_{x \in \R^d} |f(x)| + \sup_{x \neq y} \frac{|f(x)-f(y)|}{|x-y|}.
\end{equation*}
In addition, we write $\Lip^1$ for the (closed) subset $\left\{f \in C^{0,1}| \sup_{x \neq y} \frac{|f(x)-f(y)|}{|x-y|} \le 1\right\}$. 

We now introduce the notion of a simple Lipschitz-valued process: we say that the process $(f_t)_{t \ge 0}$ is in the set $\SL^1$ if
\begin{equation*}
  f_t(x) = \sum_{n=0}^\infty f_i(x) \indic{t \in (\tau_i, \tau_{i+1}]}
\end{equation*}
where $f_i \in \Lip^1$ is $\Fc_{\tau_i}$-measurable, and $(\tau_i)_i$ is a sequence of increasing stopping times such that $\P(\lim_{n \to \infty} \tau_n = \infty) = 1$.

To a process $(f_t)_t \in \SL^1$ we associate a martingale (effectively, a stochastic integral against $\xi_t$)\footnote{There is a substantial literature on the construction of stochastic integrals for the closely related case of martingale measures (effectively, when $\xi_t$ can be a signed measure, instead of a probability measure), however this literature is dominated by the cases where either $\xi_t$ satisfies an orthogonality condition, or  possesses \emph{nuclear covariance}; in our setup, neither condition is natural. We refer to \cite{Walsh:1986aa} for details. A natural question is whether the stochastic integral defined for the class $\SL^1$ of simple processes can be extended to a natural limit.} defined by:
\begin{equation*}
  I_t := \sum_{i} \left( \int f_{i}(x) \, \xi_{\tau_{i+1}\wedge t}(\dx) - \int f_{i}(x) \, \xi_{\tau_i\wedge t}(\dx)\right).
\end{equation*}
Since $\xi_t \in \Pc_1$ for each $t$, and $f_i$ are 1-Lipschitz, this sum is well defined and the resulting process is a martingale. Then we can associate an increasing process $A^f_t$ to each $f \in \SL^1$ by defining $A^f$ to be the compensator of the supermartingale $I_t^2$.

\begin{definition}
  We define the \emph{speed} of the MVM $\xi_t$ to be the smallest increasing, \cadlag{}, adapted process $[\xi]_t$ such that $[\xi]_t \ge A^f_t$ almost surely, for every $f \in \SL^1$.
\end{definition}

Note that such a process is trivially well defined by the fact that the set of increasing processes is directed downwards, so if $A_t, \tilde{A}_t$ are both increasing candidates for the speed, then so too is $A_t \wedge \tilde{A}_t$. In particular, it is easy to see that $\di [\xi]_t \ge \di A_t^f$, for any $f \in \SL^1$.

\begin{proposition}\label{prop:speed}
  Suppose $\xi$ is an MVM with speed $[\xi]$. Then
  \begin{enumerate}
  \item\label{item:2} $[\xi]_t \ge \langle \xi \rangle_t := \langle \M(\xi)\rangle_t$, the quadratic variation of the mean process;
  \item\label{item:1} $[\xi]_t \equiv 0$ if and only if $\xi$ is constant;
  \item if $(\tau_t)_{t \ge 0}$ is a time-change, and $\tilde{\xi}_t := \xi_{\tau_t}$, then $[\tilde{\xi}]_t = [\xi]_{\tau_t}$, that is, the speed is invariant under time-change.
  \end{enumerate}
\end{proposition}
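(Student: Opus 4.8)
The plan is to prove the three items separately; (1) and (2) are short, and the substance is in (3).

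For \textbf{(1)}, I would approximate the identity by bounded $1$-Lipschitz functions. For $K\ge 1$ put $f^{(K)}(x):=(x\wedge K)\vee(-K)$ (coordinate-wise for $d>1$); then $f^{(K)}\in\Lip^1$, so the constant-in-time process $f_t\equiv f^{(K)}$ lies in $\SL^1$, its associated martingale is $I_t=\xi_t(f^{(K)})-\xi_0(f^{(K)})=M^{(K)}_t-M^{(K)}_0$ with $M^{(K)}:=\xi_\cdot(f^{(K)})$, and the compensator of $I^2$ is the predictable quadratic variation $\langle M^{(K)}\rangle$. Hence $[\xi]_t\ge A^{f^{(K)}}_t=\langle M^{(K)}\rangle_t$ for every $K$. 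Since $M_t-M^{(K)}_t=\int(x-f^{(K)}(x))\,\xi_t(\dx)$ and $|x-f^{(K)}(x)|\le(|x|-K)_+$, while $t\mapsto\xi_t\big((|x|-K)_+\big)$ is a non-negative martingale whose terminal value tends to $0$ in $L^1$ as $K\to\infty$, Doob's maximal inequality gives $\sup_t|M^{(K)}_t-M_t|\to 0$ in probability, where $M:=\M(\xi_\cdot)$. Passing to the limit in $[\xi]_t\ge\langle M^{(K)}\rangle_t$ then gives $[\xi]_t\ge\langle M\rangle_t$. The only delicate step is this limit, i.e.\ $\langle M^{(K)}\rangle\to\langle M\rangle$; it is transparent when $\xi_0\in\Pc_2$ (so $M^{(K)}\to M$ in $\mathcal{H}^2$), and in general I would reduce to that case by localising, using the elementary inequality $[\xi^{\cdot\wedge T}]\le[\xi]^{T}$.

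For \textbf{(2)}: if $\xi_t\equiv\xi_0$ then every increment of every $I=I^f$ vanishes, so $A^f\equiv 0$ and the zero process is the smallest increasing majorant of $\{A^f: f\in\SL^1\}$, whence $[\xi]\equiv 0$. Conversely, if $[\xi]\equiv 0$ then $0\le A^f\le[\xi]\equiv 0$, so each $I^f$ is a martingale, null at $0$, whose square is a martingale; thus $I^f\equiv 0$. Taking $f_t\equiv f$ for a deterministic $f\in\Lip^1$ yields $\xi_t(f)=\xi_0(f)$ for all $t$, a.s.; running $f$ through a countable measure-determining family of bounded Lipschitz functions, and using right-continuity in $\W_1$, gives $\xi_t=\xi_0$ for all $t$, a.s.

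For \textbf{(3)}, write $\tilde\F_t:=\F_{\tau_t}$, so $\tilde\xi_t=\xi_{\tau_t}$ is $\tilde\F$-adapted. The heart of the matter is a correspondence between $\SL^1$-processes for the two filtrations. Given $f=\sum_i f_i\indic{t\in(\rho_i,\rho_{i+1}]}\in\SL^1$ with $\rho_i$ increasing $\tilde\F$-stopping times, the times $\sigma_i:=\tau_{\rho_i}$ are increasing $\F$-stopping times with $\F_{\sigma_i}=\tilde\F_{\rho_i}$, so $g:=\sum_i f_i\indic{t\in(\sigma_i,\sigma_{i+1}]}$ lies in $\SL^1$ for $\F$ (after a harmless extension beyond $\tau_\infty$ if needed); using $\tau_{a\wedge b}=\tau_a\wedge\tau_b$ and $\tilde\xi_\rho=\xi_{\tau_\rho}$, one checks $\tilde I^f_s=I^g_{\tau_s}$ for all $s$. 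Conversely, for $g\in\SL^1$ with times $\sigma_i$, the times $\rho_i:=\inf\{t:\tau_t\ge\sigma_i\}$ satisfy $\{\rho_i\le t\}=\{\sigma_i\le\tau_t\}\in\F_{\tau_t}$, hence are $\tilde\F$-stopping times and produce an $f\in\SL^1$ with $\tilde I^f=I^g_{\tau_\cdot}$, at least for continuous $\tau$ — the flat stretches of $\tau$, on which $\tilde\xi$ is constant, contributing nothing by (2). The standard time-change theorem for martingales, and the transfer of compensators it entails, then gives $\tilde A^f=A^g_{\tau_\cdot}$. Hence $t\mapsto[\xi]_{\tau_t}$ is increasing, right-continuous, $\tilde\F$-adapted and dominates every $\tilde A^f$, i.e.\ is an admissible speed candidate for $\tilde\xi$, so $[\tilde\xi]_t\le[\xi]_{\tau_t}$; the reverse inequality follows by the symmetric argument via the inverse time-change. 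I expect the main obstacle to be precisely this last point — making the correspondence of simple integrands rigorous and controlling both sides at jumps of $\tau$; the statement is cleanest for a continuous time-change, with (2) absorbing the flat stretches.
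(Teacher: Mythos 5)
Your arguments for (2) and (3) are essentially the paper's: your (2) is the contrapositive of the argument given there (take $f_t=h\indic{t\in(t_1,t_2]}$ for a witness $h\in\Lip^1$ and observe the resulting martingale is non-trivial), and for (3) the paper offers nothing beyond ``immediate from the definition'', so your correspondence of simple integrands under the time-change, with the caveats you yourself flag at jumps and flat stretches of $\tau$, is an acceptable elaboration rather than a divergence.

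The genuine gap is in (1). Having replaced the identity by its truncations $f^{(K)}$, you must pass to the limit in $[\xi]_t\ge\langle M^{(K)}\rangle_t$, and your proposed repair --- localise so as to reduce to the case $\xi_0\in\Pc_2$, via $[\xi^{\cdot\wedge T}]\le[\xi]^T$ --- does not work: stopping an MVM in \emph{time} cannot create second moments in \emph{space}. For example, with $\nu\in\Pc_1\setminus\Pc_2$ centred and $\xi_t:=\nu*\Nc(B_{t\wedge1},1-t\wedge1)$ (an MVM whose mean process is $B_{t\wedge1}$), one has $\xi_{t\wedge T}\notin\Pc_2$ for every stopping time $T$, so no localisation brings you into the $\mathcal{H}^2$ regime where your limit is ``transparent''; and ucp convergence $M^{(K)}\to M$ by itself does not yield convergence (or even the required one-sided comparison) of the brackets. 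The paper sidesteps all of this: the definition of $I_t$ uses only that $\xi_t\in\Pc_1$ and that the $f_i$ are $1$-Lipschitz, so the identity $f_t(x)=x$ is an admissible integrand; then $I=\M(\xi_\cdot)-\M(\xi_0)$ and $A^f$ is directly the compensator $\langle\xi\rangle$, with no truncation or limit needed. If you insist on bounded integrands (as the sup-norm in the definition of $C^{0,1}$ might suggest), you still owe an actual argument that $\liminf_K\langle M^{(K)}\rangle_t\ge\langle\M(\xi)\rangle_t$, which your proposal does not supply.
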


We note that the condition in (\ref{item:1}) is not true for the quadratic variation process, as the subsequent example demonstrates:
\begin{example}\label{ex:speed_not_equal_qv}
  Let $\xi_t$ be an MVM in a Brownian filtration with $\xi_0 = U([-1,1])$. Between time 0 and the first hitting time of $\pm \eps$, we use the Brownian motion to flip an artificial coin, based on which we have either $\xi = U([-1,-1/\sqrt{2}]\cup [0,1/\sqrt{2}])$ or the converse. Then for this example, the mean process does not move at all, but the distance changes a lot. Variants on this can be used to get small movement in the quadratic variation, but large movements in speed in various natural ways. In particular, we see that it is certainly not always true that $[\xi]_t= \langle \xi \rangle_t$.
\end{example}

\begin{proof}[Proof of Proposition~\ref{prop:speed}]
  The first part of the result follows immediately from considering a simple process such that $f_t(x) = x$ for all $t$.

  To see (\ref{item:1}), note that if $\xi$ is not constant almost surely, then there exist $h \in \Lip^1$ and $t_1 < t_2$ such that $\int h(x) \, \xi_{t_1}(\dx) \neq \int h(x) \, \xi_{t_2}(\dx)$ with positive probability. Taking $f_t(x) = h(x) \indic{t \in (t_1, t_{2}]}$, then $f \in \SL^1$ gives a non-trivial martingale, and hence a non-zero compensator.

  Finally, the last part follows immediately from the definition of the speed of the process.  
\end{proof}

\subsection{The class of consistent MVMs}\label{sec:consistent}
	
As demonstrated by Example \ref{ex:speed_not_equal_qv}, the speed of an MVM will typically not coincide with the quadratic variation of its mean process. In this section we study a class of MVMs for which this is however the case. Specifically, the following result yields a sufficient condition for the speed and quadratic variation of a process to actually coincide:
	
\begin{definition}\label{def:consistent}
  We call a Lipschitz-Markov MVM $\xi_t$ consistent if its associated function $m_t$ satisfies $\W_1(m_s(x),m_t(x)) \le \alpha(\eps)$ for all $x$ and $|t-s|<\eps$, for some function $\alpha:\R_+\to\R_+$ such that $\alpha(\eps) \eps^{-1/2} \to 0$ as $\eps \to 0$.
\end{definition}

\begin{lemma}\label{lemma:consistent_LM}
  If $\xi_t$ is a consistent Lipschitz-Markov MVM, then $[\xi]_t = \langle \xi\rangle_t$.
\end{lemma}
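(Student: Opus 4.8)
The plan is to show both inequalities. The inequality $[\xi]_t \ge \langle\xi\rangle_t$ is already part~(\ref{item:2}) of Proposition~\ref{prop:speed}, so the real content is the reverse inequality $[\xi]_t \le \langle\xi\rangle_t$. Since $[\xi]$ is by definition the smallest increasing \cadlag{} adapted process dominating every compensator $A^f$, $f\in\SL^1$, it suffices to show that $A^f_t \le \langle\xi\rangle_t$ for every simple Lipschitz-valued process $f\in\SL^1$. Equivalently, I would show that for every $f\in\SL^1$ the process $\langle\xi\rangle_t - A^f_t$ is increasing, i.e. that $I^2_t - \langle\xi\rangle_t$ is a supermartingale (so its compensator $A^f$ is dominated by $\langle\xi\rangle$). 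By the structure of $\SL^1$ and optional stopping, it is enough to treat a single deterministic interval: fix $s<t$ and an $\Fc_s$-measurable $g\in\Lip^1$, and show $\E[(\xi_t(g)-\xi_s(g))^2 \mid \Fc_s] \le \E[\langle\xi\rangle_t - \langle\xi\rangle_s \mid \Fc_s]$, then sum over the intervals $(\tau_i,\tau_{i+1}]$ using the martingale/orthogonality of the increments. (One also needs the elementary fact that the increment $\E[(\xi_t(g)-\xi_s(g))^2\mid\Fc_s]$, summed over a partition and then over all of $\SL^1$, produces exactly the compensator $A^f$; this is the usual bracket-of-a-martingale bookkeeping.)

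The heart of the argument is the one-step Lipschitz estimate. Using the Lipschitz-Markov representation $\xi_u(\omega) = m_u(X_u(\omega))$ with $X_u := \M(\xi_u)$, and the fact that $g$ is $1$-Lipschitz, I would bound
\begin{align*}
  |\xi_t(g) - \xi_s(g)|
  &\le |\xi_t(g) - m_t(X_s)(g)| + |m_t(X_s)(g) - m_s(X_s)(g)| \\
  &\le \W_1\big(m_t(X_t), m_t(X_s)\big) + \W_1\big(m_t(X_s), m_s(X_s)\big).
\end{align*}
The first term equals $|X_t - X_s|$ by the Lipschitz-Markov property \eqref{eq:def_ML} (since $m_t(X_t)=\xi_t$ and $m_t(X_s)$ is the time-$t$ measure conditioned on the mean being $X_s$, which is again a Lipschitz-Markov measure with mean $X_s$), and the second term is $\le \alpha(t-s)$ by consistency. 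Squaring and using $(a+b)^2 \le (1+\delta)a^2 + (1+\delta^{-1})b^2$, then optimising $\delta$, gives roughly
\begin{align*}
  \E\big[(\xi_t(g)-\xi_s(g))^2 \mid \Fc_s\big]
  \le \E\big[(X_t-X_s)^2 \mid \Fc_s\big] + C\,\sqrt{\E[(X_t-X_s)^2\mid\Fc_s]}\;\alpha(t-s) + \alpha(t-s)^2.
\end{align*}
Now refine the partition: writing $s=t_0<t_1<\dots<t_N=t$ with mesh $\to 0$, sum the estimates, use $\sum(X_{t_{k+1}}-X_{t_k})^2 \to \langle\xi\rangle_t - \langle\xi\rangle_s$ (in $L^1$, $X$ being a continuous martingale since $\xi$ is continuous — Lipschitz-Markov MVMs have continuous mean), and observe that the cross term is bounded by $C\,\alpha(\mathrm{mesh})\,(\mathrm{mesh})^{-1/2} \cdot \big(\sum (X_{t_{k+1}}-X_{t_k})^2\big)^{1/2}\,(t-s)^{1/2}$ via Cauchy–Schwarz on the sum, while $\sum\alpha(\mathrm{mesh})^2 \le (t-s)\,\alpha(\mathrm{mesh})^2(\mathrm{mesh})^{-1}$. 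Both error terms vanish as the mesh $\to 0$ precisely because $\alpha(\eps)\eps^{-1/2}\to 0$. This yields $\E[(\xi_t(g)-\xi_s(g))^2\mid\Fc_s] \le \E[\langle\xi\rangle_t - \langle\xi\rangle_s\mid\Fc_s]$ in the limit, which is what we need (the left side does not depend on the partition).

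I expect the main obstacle to be the bookkeeping connecting the abstract compensator $A^f$ to these partition sums: one must argue that for $f\in\SL^1$ with a fixed, possibly coarse set of stopping times $(\tau_i)$, the compensator of $I^2$ is controlled by $\langle\xi\rangle$, and the cleanest route is to note that $A^f$ is additive over $[0,\tau_i]$ and that on each interval one may pass to a finer deterministic partition inside the conditional expectation (justified by the tower property and by right-continuity of $\langle\xi\rangle$ and of $\xi$ in $\W_1$). A secondary technical point is ensuring the estimate $|\xi_t(g)-m_t(X_s)(g)| \le |X_t-X_s|$ is applied correctly — it requires knowing that $m_t(X_s)$ is genuinely the Lipschitz-Markov measure attached to the value $X_s$ at time $t$, which is exactly the content of the representation in Lemma~\ref{lem:mark-lip} together with \eqref{eq:def_ML}. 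Once these two points are handled, the rest is the Cauchy–Schwarz estimate above, and the hypothesis $\alpha(\eps)\eps^{-1/2}\to 0$ is used in exactly the two places indicated.
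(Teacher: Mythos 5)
Your proposal is correct and follows essentially the same route as the paper: the same decomposition of $\W_1(\xi_t,\xi_s)$ via the intermediate measure $m_t(\M(\xi_s))$, bounding it by $|\M(\xi_t)-\M(\xi_s)|+\alpha(t-s)$, then squaring, summing over a refined partition, applying Cauchy--Schwarz, and using $\alpha(\eps)\eps^{-1/2}\to 0$ to kill the error terms, with the lower bound coming from Proposition~\ref{prop:speed}. The only difference is bookkeeping: you dominate the compensator $A^f$ by showing $I^2-\langle\xi\rangle$ is a supermartingale via conditional one-step estimates, whereas the paper identifies $A^f_t$ and $\langle\xi\rangle_t$ as almost-sure limits of sums of squared increments along a common sequence of partitions; both are standard and equivalent here.
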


\begin{proof}
  Let $g \in \Lip^1$. By the Lipschitz-Markov property of $\xi_{\cdot}$, for $s < t$ we get
  \begin{align*}
    \left|\int g(x)\left(\xi_{t}-\xi_{s}\right)(\dx)\right|
    & \le \W_1 (\xi_{t},\xi_{s})\\
    & \le \W_1 \left(\xi_{t},m_{t}(\M(\xi_{s}))\right) + \W_1 \left(m_{t}(\M(\xi_{s})),\xi_{s}\right)\\
    & \le |\M(\xi_{t})-\M(\xi_{s})| + \alpha(t-s).
  \end{align*}

  Now take $f \in \SL^1$. Possibly taking a finer (stochastic) partition $\tau_1 \le \tau_2 \le \dots$, which includes the original partition associated with $f$, and also the time $t$, and using the fact that $\alpha(\eps)^2 < \delta \eps$, for some $\delta >0$\footnote{For later use, we note that this identity holds even if $\delta$ is a fixed constant.}, we have: \reqnomode
  \begin{align}  
    & \hspace{-1cm}\sum_{i}\left( \int f_i(x)\left(\xi_{\tau_{i+1}\wedge t}-\xi_{\tau_{i}\wedge t}\right)(\dx)\right)^2  \label{eq:proof_consistency_2} \\
      & \le  \sum_{i:\tau_i<t} \big[\alpha(\tau_{i+1} -\tau_i)^2 + 2 \alpha(\tau_{i+1} -\tau_i) |\M(\xi_{\tau_{i+1}})-\M(\xi_{\tau_i})| \nonumber \\
    & \qquad \qquad {} + |\M(\xi_{\tau_{i+1}})-\M(\xi_{\tau_i})| ^2   \big] \nonumber
    \\
    & \le  \delta t  + 2\bigg(\sum_{i:\tau_i<t} \alpha(\tau_{i+1} -\tau_i)^2\bigg)^{\frac{1}{2}} 
      \bigg(\sum_{i:\tau_i<t} |\M(\xi_{\tau_{i+1}})-\M(\xi_{\tau_i})|^2\bigg)^{\frac{1}{2}} \nonumber\\ 
    & \qquad \qquad {} + \sum_i |\M(\xi_{\tau_{i+1}\wedge t})-\M(\xi_{\tau_i\wedge t})| ^2 \nonumber\\
    & \le  \delta t + 2\sqrt{\delta t  \sum_i |\M(\xi_{\tau_{i+1}\wedge t})-\M(\xi_{\tau_i\wedge t})|^2} + \sum_i |\M(\xi_{\tau_{i+1}\wedge t})-\M(\xi_{\tau_i\wedge t})| ^2, \nonumber
  \end{align}
  \leqnomode
  where we used Cauchy-Schwarz. Note that for $\delta>0$ arbitrarily small, \eqref{eq:proof_consistency_2} still holds for $\varepsilon$ sufficiently small. 
  Further, we may find a sequence of partitions, with $|\Pi_n|\to 0$, such that $A^f_t = \lim_{n\uparrow\infty} \sum|I_{\tau^n_{i+1}\wedge t}-I_{\tau^n_{i}\wedge t}|^2$ and $\langle\xi\rangle_t = \lim_{n\uparrow\infty} \sum|\M(\xi_{\tau^n_{i+1}\wedge t})-\M(\xi_{\tau^n_{i}\wedge t})|^2$, \as{}. 
  Hence, we deduce that $A^f_t \le \langle \xi\rangle_t$, $t\ge 0$.  
  Since $f\in\SL^1$ was arbitrary, we thus have that $\langle \xi\rangle_t$ is an increasing process dominating $A^f_t$ for all $f\in\SL^1$. It follows that $[\xi]_t \le \langle \xi\rangle_t$, and applying Proposition \ref{prop:speed} we conclude.
\end{proof}

\begin{corollary} \label{cor:cont}
  A consistent Lipschitz-Markov MVM $\xi$ is continuous if its mean process, $\M(\xi)$, is continuous.
\end{corollary}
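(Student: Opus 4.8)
The plan is to leverage the estimate established in the proof of Lemma~\ref{lemma:consistent_LM}. Recall that for a consistent Lipschitz-Markov MVM, the key inequality obtained there is that for $s<t$ and any $1$-Lipschitz $g$,
\[
  \left| \int g(x)\, (\xi_t - \xi_s)(\dx) \right| \le \W_1(\xi_t, \xi_s) \le |\M(\xi_t) - \M(\xi_s)| + \alpha(t-s),
\]
where $\alpha(\eps)\eps^{-1/2}\to 0$, and in particular $\W_1(\xi_t,\xi_s) \le |\M(\xi_t)-\M(\xi_s)| + \alpha(|t-s|)$. First I would observe that, by Definition~\ref{def:CMVM}, continuity of the MVM $\xi$ is exactly continuity of $t\mapsto \xi_t$ in the $\W_1$-topology, i.e.\ continuity of $t\mapsto \W_1(\xi_t,\xi_s)$ as $t\to s$ for each fixed $s$ (together with the analogous one-sided statement).

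The main step is then immediate from the displayed bound: fix $s$ and let $t\downarrow s$ or $t\uparrow s$. Since $\M(\xi)$ is assumed continuous, $|\M(\xi_t)-\M(\xi_s)|\to 0$; and since $\alpha(\eps)\to 0$ as $\eps\to0$ (which follows from $\alpha(\eps)\eps^{-1/2}\to0$), the right-hand side tends to $0$. Hence $\W_1(\xi_t,\xi_s)\to 0$, i.e.\ $\xi$ is continuous in the $\W_1$-topology, which is the definition of continuity of the MVM. Equivalently, for any $1$-Lipschitz $f$, $|\xi_t(f)-\xi_s(f)|\le \W_1(\xi_t,\xi_s)\to0$, so $\xi_\cdot(f)$ is continuous, matching Definition~\ref{def:CMVM} directly.

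One small technical point to be careful about: the bound above was derived for $s<t$, using the Lipschitz-Markov property in the form $\W_1(\xi_t,\xi_s)\le \W_1(\xi_t,m_t(\M(\xi_s))) + \W_1(m_t(\M(\xi_s)),\xi_s) \le |\M(\xi_t)-\M(\xi_s)| + \alpha(t-s)$. To handle $t\uparrow s$ as well, I would run the same triangle-inequality argument with the roles of $s$ and $t$ swapped, writing $\W_1(\xi_s,\xi_t)\le \W_1(\xi_s, m_s(\M(\xi_t))) + \W_1(m_s(\M(\xi_t)),\xi_t)$ and again bounding by $|\M(\xi_s)-\M(\xi_t)| + \alpha(|s-t|)$ via the consistency estimate $\W_1(m_s(x),m_t(x))\le\alpha(|s-t|)$ together with the Lipschitz property of the map $m_s$. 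I do not anticipate a serious obstacle here; the only genuine content is noticing that the inequality already proved inside Lemma~\ref{lemma:consistent_LM} delivers the modulus of continuity for free once $\M(\xi)$ is continuous, and the rest is the duality characterisation of $\W_1$-continuity recorded after Definition~\ref{def:CMVM}.
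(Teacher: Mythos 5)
Your argument is correct, but it is not the route the paper takes. The paper's proof does not revisit the estimate $\W_1(\xi_t,\xi_s)\le|\M(\xi_t)-\M(\xi_s)|+\alpha(\eps)$ at all: it takes a constant simple process $f\in\SL^1$ with $f_i=g\in\Lip^1$, so that the associated martingale is $I_t=\xi_t(g)-\xi_0(g)$, and argues that it suffices to rule out jumps of the compensator $A^f$; by Lemma~\ref{lemma:consistent_LM} one has $\di A^f_t\le\di\langle\xi\rangle_t=\di\langle\M(\xi)\rangle_t$, which is continuous by hypothesis, and the conclusion is drawn from there. You instead reuse the pathwise inequality established \emph{inside} the proof of Lemma~\ref{lemma:consistent_LM} (Lipschitz--Markov step plus consistency) to get a deterministic modulus of continuity for $t\mapsto\xi_t$ in $\W_1$, and then invoke the duality characterisation of Definition~\ref{def:CMVM}. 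What your route buys is that it is more elementary and arguably sharper: it yields $\W_1$-continuity of the paths directly, without passing through the compensator of $I^2$ — a step where one must in general be careful, since continuity of a predictable bracket by itself controls only jumps at predictable times. What the paper's route buys is modularity: it uses only the \emph{statement} of Lemma~\ref{lemma:consistent_LM} rather than its internal estimate. Two small points on your write-up: the symmetric case $t\uparrow s$ is indeed harmless, since both $\W_1$ and the consistency bound in Definition~\ref{def:consistent} are symmetric in $s,t$; and the bound $\W_1\bigl(m_t(\M(\xi_t)),m_t(\M(\xi_s))\bigr)\le|\M(\xi_t)-\M(\xi_s)|$ implicitly assumes $m_t$ is defined and $1$-Lipschitz (for $\W_1$) at the point $\M(\xi_s)$ — but this is exactly the same implicit step made in the paper's own proof of Lemma~\ref{lemma:consistent_LM}, so it introduces no new gap.
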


\begin{proof}
  Let $f \in \SL^1$ correspond to the constant process, $f_i = g \in \Lip^1$ for all $i$. It is sufficient to show that $A^f$ has no jumps, but by Lemma~\ref{lemma:consistent_LM}, $\di A^f_t \le \di \langle \xi \rangle_t$, and the conclusion follows.
\end{proof}

\begin{proposition} \label{prop:Consistent} The Bass-Root MVMs with Lipschitz scale function $\kappa$ (and hence also the Bass and Root MVMs) are continuous Lipschitz-Markov MVMs with
  \begin{align*}
    \langle \xi^{\kappa,\Rc}\rangle_t = [\xi^{\kappa,\Rc}]_t.
  \end{align*} 
\end{proposition}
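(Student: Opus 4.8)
The plan is to reduce the statement to the two inputs already prepared in this section: the Lipschitz--Markov property (which, for $\xi^{\kappa,\Rc}$ with $\kappa$ Lipschitz, is exactly Lemma~\ref{lem:mark-lip}) and consistency (which then yields the speed identity by Lemma~\ref{lemma:consistent_LM} and, together with continuity of the mean, also continuity by Corollary~\ref{cor:cont}). For continuity I would in fact argue directly: for every $1$-Lipschitz $f$ the process $\xi^{\kappa,\Rc}_\cdot(f)=\E[(f\circ\kappa)(B_{\tau_\Rc})\mid\Fc_\cdot]$ is a uniformly integrable martingale in the Brownian filtration (using $B_{\tau_\Rc}\sim\lambda\in\Pc_1$ and $f\circ\kappa$ Lipschitz), hence admits a continuous version, so $\xi^{\kappa,\Rc}$ is a continuous MVM in the sense of Definition~\ref{def:CMVM}; equivalently one may apply Corollary~\ref{cor:cont}, since $\M(\xi^{\kappa,\Rc}_\cdot)$ is a continuous martingale. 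The Bass and Root MVMs arise as the special cases $\kappa=\mathrm{id}$ and $\Rc=\{(t,x):t\ge1\}$, so it only remains to prove the speed identity via consistency.

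The subtlety is that \emph{strict} consistency typically fails right at the (finitely terminating) terminal time: for the pure Bass MVM $\xi^h_t=h_\#\Nc(B_t,1-t)$ one gets $\Wc_1(m_s(x),m_t(x))$ of order $|\sqrt{1-s}-\sqrt{1-t}|$, which is as large as order $\sqrt{t-s}$ when $s,t$ lie near the terminal time $1$. I would therefore establish consistency with a \emph{linear} modulus $\alpha(\varepsilon)=c\,\varepsilon$ -- which satisfies $\alpha(\varepsilon)\varepsilon^{-1/2}\to0$, and which is exactly the situation in which the estimate in the proof of Lemma~\ref{lemma:consistent_LM} holds with a fixed $\delta$ (the footnote there) -- but only on time-sets bounded away from termination: for the pure Bass MVM on each $[0,T]$ with $T<1$, and, for a Root barrier (whose terminal time need not be bounded), up to the stopping time $\sigma_\theta$ = the first time $(t,B_t)$ comes within time-distance $\theta$ of $\Rc$. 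On each such piece the proof of Lemma~\ref{lemma:consistent_LM} goes through unchanged (every time occurring in it is then $\le T$, resp.\ $\le\sigma_\theta$), giving $[\xi^{\kappa,\Rc}]_t=\langle\xi^{\kappa,\Rc}\rangle_t$ there; letting $T\uparrow1$, resp.\ $\theta\downarrow0$, and using the continuity just established (so neither process jumps at the terminal time) together with the fact that both are afterwards constant, the identity extends to all $t\ge0$.

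It then remains to prove the consistency estimate on each such piece. For the Bass factor, write $m_t(v)=h_\#\Nc(y_t(v),1-t)$ with $y_t(v)$ characterised by $\E[h(y_t(v)+\sqrt{1-t}\,Z)]=v$ for $Z$ standard normal; since $h$ is $L$-Lipschitz and non-decreasing, $\Wc_1(m_s(v),m_t(v))\le L\bigl(|y_s(v)-y_t(v)|+\sqrt{2/\pi}\,|\sqrt{1-s}-\sqrt{1-t}|\bigr)$, and differentiating the defining relation (the $y$-derivative of $y\mapsto\E[h(y+\sqrt{1-t}\,Z)]$ being positive and, on the relevant range, bounded below) controls $|y_s(v)-y_t(v)|$ by a constant times $|\sqrt{1-s}-\sqrt{1-t}|$, which on $[0,T]$, $T<1$, is $\le c_T|t-s|$. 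For the Root factor, $m_t(x)=\kappa_\#\mu_\Rc^{t,x}$ with $\mu_\Rc^{t,x}$ the Root exit law from $(t,x)$; coupling $\mu_\Rc^{s,x}$ and $\mu_\Rc^{t,x}$ by running one Brownian motion from $x$ and using that the Root stopping time from the later starting time is dominated pathwise by that from the earlier, one obtains $\Wc_1(\mu_\Rc^{s,x},\mu_\Rc^{t,x})^2\le\E[\tau^{s,x}_\Rc-\tau^{t,x}_\Rc]$, and it remains to bound this expected gap by a constant times $|t-s|$, uniformly over starting points at time-distance $\ge\theta$ from $\Rc$. This last estimate -- where the geometry of the barrier enters, via a strong-Markov/optional-stopping argument that uses the Lipschitz--Markov property of the Root MVM to control the dependence on the terminal position -- is the step I expect to be the main obstacle; the Bass--Root case is then the combination of the two, while the Gaussian computation, the implicit-function estimate, and the martingale-representation continuity are routine.
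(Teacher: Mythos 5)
Your skeleton matches the paper's strategy (Lipschitz--Markov from Lemma~\ref{lem:mark-lip}, a consistency estimate away from the barrier fed into Lemma~\ref{lemma:consistent_LM}, a separate argument at the terminal time), but the crucial quantitative step is missing, and the route you propose for it cannot deliver what you need. For the Root factor you bound
\begin{equation*}
  \Wc_1\bigl(\mu_\Rc^{s,x},\mu_\Rc^{t,x}\bigr)^2 \;\le\; \E\bigl[\tau^{s,x}_\Rc-\tau^{t,x}_\Rc\bigr],
\end{equation*}
and then hope for $\E[\tau^{s,x}_\Rc-\tau^{t,x}_\Rc]\le C_\theta|t-s|$. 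Even if that time-gap estimate were available (you yourself flag it as the ``main obstacle'' and do not prove it; note also that by the strong Markov property the gap is governed by restarts at points within horizontal distance $t-s$ of the barrier, where expected hitting times scale like $\sqrt{t-s}$ rather than $t-s$, so the estimate is genuinely delicate), your Cauchy--Schwarz route only yields $\Wc_1(m_s(x),m_t(x))\lesssim\sqrt{t-s}$, i.e.\ a modulus $\alpha(\eps)=c\sqrt{\eps}$. This does \emph{not} satisfy the consistency requirement $\alpha(\eps)\eps^{-1/2}\to0$ of Definition~\ref{def:consistent}, so Lemma~\ref{lemma:consistent_LM} cannot be invoked to conclude $[\xi^{\kappa,\Rc}]=\langle\xi^{\kappa,\Rc}\rangle$; a $\sqrt{\eps}$ modulus only reproduces the weaker domination $\di[\xi]_t\le\tilde\beta(\dt+\di\langle\xi\rangle_t)$, which is exactly what the paper reserves for the no-jump step at $\tau_\Rc$ and which is strictly insufficient for the equality. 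Your announced linear modulus $\alpha(\eps)=c\eps$ is thus never actually produced by the method you describe. The paper obtains it differently: away from the barrier it bounds $\Wc_1(m_s(x),m_t(x))\le\beta^d(t-s)$ \emph{directly}, by coupling the exit distributions of the two time-shifted Brownian motions from a space-time box disjoint from $\Rc$ and estimating the uncoupled mass via the total-variation and hitting-density estimates of Lemmas~\ref{lem:appendix_tv} and~\ref{lem:appendix_estimates}; this exploits the smoothness in the time parameter of the exit laws from a region strictly away from the barrier, rather than passing through the expected time gap.

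Two smaller remarks. Your Bass-factor Gaussian computation on $[0,T]$, $T<1$, is plausible (the paper instead treats Bass as the special case $\Rc=\{(t,x):t\ge1\}$ of the same box argument), and your localisation away from termination is the same device as the paper's stopping times $\tau^d$. At the terminal time, however, ``using the continuity just established (so neither process jumps)'' is not by itself an argument: the speed is the smallest c\`adl\`ag increasing dominant of an uncountable family of compensators, and such an envelope can jump even when every $A^f$ is continuous; you would need either the paper's global $K\sqrt{t-s}$ bound (via the local-time estimate), or an explicit minimality argument replacing $[\xi]$ after $\tau_\Rc$ by its left limit and checking the resulting process is an admissible competitor dominating every $A^f$. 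This last point is repairable; the consistency estimate for the Root factor is the genuine gap.
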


\begin{proof}
  Consider a Bass-Root MVM, with $K$-Lipschitz scale function $\kappa$, that is $\xi^{\kappa,\Rc}_t = \kappa_{\#}\xi^\Rc_t$ for an appropriate Root MVM $\xi^\Rc_t$ with barrier $\mathcal{R}$; recall that $\tau_\Rc=\inf\{t\ge 0:\xi^\Rc_t\in\Pc^s\}$.  Let $t>0$ and $x\in\R$ be such that $d((t,x),\mathcal{R})>0$, where $d$ denotes the Euclidean distance, and let $0<s<t$.  In turn, let $B^{s,x}$ and $B^{t,x}$ be two independent BMs starting respectively in $(s,x)$ and $(t,x)$.  Define $m_s(x):=\Lc(\kappa(B^{s,x}_{\tau^s}))$, with $\tau^s$ the first time $B^{s,x}$ hits the barrier $\mathcal{R}$, and let $m_t(x)$ be analogously defined.
	
  We first derive an appropriate bound on $\Wc_1(m_s(x),m_t(x))$.  To this end, let $\delta>0$ and $a>0$ such that the box $[s,t+\delta]\times[x-a,x+a]$ is disjoint from $\mathcal{R}$. Note that we have the following finite bound
  \begin{align}\label{eq:kappa}
    \beta_0:=\sup\left\{\E^{r,y}\left[\left|\kappa(B_{\tau^{r}}^{r,y})\right|\right]:(r,y )\in [s,t+\delta]\times[x-a,x+a]\right\}<\infty.
  \end{align}
  In order to compare the laws of $\kappa(B^{s,x}_{\tau^s})$ and $\kappa(B^{t,x}_{\tau^t})$, we couple the paths originating from $(s,x)$ and $(t,x)$, respectively, and exiting the box $[s,t+\delta]\times[x-a,x+a]$ at the same points as those will necessarily contribute to the same law. It then suffices to find an appropriate bound on the mass that we may not couple in this way.  To this end, let $\beta>0$, and note that according to Lemma \ref{lem:appendix_estimates} (i), the probability of $B^{s,x}$ exiting the box before $t$ is bounded by $2 \beta (t-s)$, provided that $t-s<\Delta$, for some sufficiently small $\Delta>0$.  Further, applying Lemma \ref{lem:appendix_estimates} (iii), choosing if necessary $\delta$ and $\Delta$ even smaller, the total mass corresponding to paths exiting the box at the `top' and `bottom' but not being coupled is bounded by $\beta(t-s)$ for $t-s<\Delta$.  Finally, applying Lemma \ref{lem:appendix_tv}, we may bound the total mass corresponding to paths exiting the box along the line $(x,t+\delta)$, $x\in(-a,a)$, and not being coupled, by $\sqrt{2/\pi}\frac{t-s}{\delta}$; indeed, although the presence of the `top' and `bottom' of the box naturally modifies the distribution of $B^{s,x}$ and $B^{t,x}$ along this line, the corresponding contribution to the total variation distance is compensated by the fact that the `tails' for $x\not\in(-a,a)$ are cut off.  Summing up and applying \eqref{eq:kappa}, we obtain that there exists $\delta>0$ and $\Delta>0$, such that for $t-s<\Delta$,
  \begin{align*}
    \Wc_1\left(\Lc\left(\kappa(B^{s,x}_{\tau^s})\right),\Lc\left(\kappa(B^{t,x}_{\tau^t})\right)\right) 
      & \le \beta_0 \left(3\beta + \sqrt{2/\pi}\delta^{-1}\right) (t-s).
  \end{align*}
  In consequence, for any $d>0$, there exist $\Delta^d>0$ and $\beta^d>0$ such that for any $0<s<t$ and $x>0$ with $d((t,x),\mathcal{R}))>d$ and $t-s<\Delta^d$, 
  \begin{align*}
    \Wc_1\left(m_s(x),m_t(x)\right)\le \beta^d (t-s).
  \end{align*} 
  Applying Lemma \ref{lemma:consistent_LM}, we obtain $[\xi^{\kappa,\Rc}]_t=\langle \xi^{\kappa,\Rc}\rangle_t$, for $t\le \tau^d:=\inf\{t\ge 0: d((\mathbb{M}(\xi^\Rc_t),t),\mathcal{R})\le d\}$. Since $d>0$ was arbitrarily chosen, it follows that $[\xi^{\kappa,\Rc}]_t=\langle \xi^{\kappa,\Rc}\rangle_t$, for $t<\tau_\Rc$.
  
  It only remains to argue that the speed $[\xi^{\kappa,\Rc}]_\cdot$ does not possess a jump at $t=\tau_\Rc$.  To this end, consider the Root barrier corresponding to a single atom. Let $s<t$ and consider now the same BM starting in $(s,x)$ and $(t,x)$ and couple the corresponding paths. Then, using that $\kappa$ is $K$-Lipschitz we have
  \begin{align*} 
    \Wc_1(m_s(x),m_t(x))\le K \E [|B^{s,x}_{\tau^s}-B^{t,x}_{\tau^t}|] \le K\E [L^0_{t-s}] \le K\sqrt{t-s};
  \end{align*} 
  we note that this bound is improved for any other Root barrier.  In particular, the first inequality in \eqref{eq:proof_consistency_2} holds for $\alpha(t) = t$, and it follows that $\di [\xi^{\kappa,\Rc}]_t \le \tilde \beta (\dt+\di\langle\xi^{\kappa,\Rc}\rangle_t)$, for some $\tilde \beta>0$, and since $\langle\xi^{\kappa,\Rc}\rangle_t$ is continuous for the Bass-Root embedding we may conclude.
\end{proof}

\begin{remark}
  Yet another natural candidate for measuring the `speed' of a MVM, is given by the following increasing process:
  \begin{align}\label{eq:speed_alt}
    \widetilde{[\xi]}_t = \liminf_{|\Pi_n| \to 0} \sum_{i: t_{i+1}<t} \W_1^2(\xi_{t_{i+1}},\xi_{t_i}),
  \end{align}
  where the limit is taken over a fixed sequence $\Pi_n = \{0 = t_0 \le t_1 \le \dots , t_k \to \infty \text{ as } k \to \infty\}$ of partitions of $\R_+$ with $|\Pi_n| = \sup_{i} |t_{i+1}-t_i|$.
  Recall that by the duality of the Wassertein distance, 
  $|\Wc_1(\xi_{t_{i+1}},\xi_{t_i})| = \sup\{|\xi_{t_{i+1}}(f)-\xi_{t_i}(f)|:f\in \textrm{$\mathcal{L}ip^1(\Fc_{t_{i+1}})$}\}$,
  which yields $\widetilde{[\xi]}_t\ge [\xi]_t$. For consistent MVMs, we immediately see that equality holds. More generally, regardless of whether the speed coincides with the quadratic variation or not, we expect equality to hold for a large class of MVMs being adapted to the Brownian filtration.
\end{remark}

\subsection{The $\rho$-speed of an MVM}

We observe that, although important in some of our definitions (in particular, through the use of the Wasserstein topology), the metric of the underlying space ($\R^d$) so far has not been too significant. In this section, we consider one way of transforming the underlying geometry of the space, and examine the consequences for the processes. The starting point for this observation is that if $(\xi_t)_t$ is an MVM, and $\rho$ is an increasing function such that $|\rho(x)| \le K(1+|x|)$ for some $K>0$, then $\chi_t := \rho_\# \xi_t$ is also a MVM. This follows immediately from the definition of the pushforward, so that $\int f(x) \, \chi_t(\dx) = \int f\circ\rho(x) \, \xi_t(\dx)$.

Of course, to the process $\chi$, we can associate a speed process, $[\chi]_t$, and we can think of this process as the $\rho$-speed of the process $\xi$. This corresponds to the following definition:

We say that $f \in \Lip^{1,\rho}$ if and only if $f = g\circ\rho$,
where $g \in \Lip^1$. Then $(f_t) \in \SL^{1,\rho}$ if $f$ is of the form
\begin{equation*}
  f_t(x) = \sum_{n=0}^\infty f_i(x) \indic{t \in (\tau_i, \tau_{i+1}]}
\end{equation*}
where $f_i \in \Lip^{1,\rho}$ is $\Fc_{\tau_i}$-measurable, and $(\tau_i)$ is a sequence of increasing stopping times such that $\P(\lim_{n \to \infty} \tau_n = \infty) = 1$. Then we have:
\begin{definition}
  The \emph{$\rho$-speed} of the MVM $\xi$ is the smallest increasing, \cadlag{}, adapted process $[\xi]^\rho_t$ such that $[\xi]_t^\rho \ge A_t^f$ almost surely, for every $f \in \SL^{1,\rho}$.
\end{definition}
It is a simple consequence of the definitions and the argument  that $[\xi]^\rho_t \ge \langle \M^\rho(\xi)\rangle_t$, where $\M^\rho(\xi) := \int \rho(x) \, \xi(\dx)$.

\subsection{The second optimisation problem: optimality of the Bass and modified Root solution}

As a consequence of the above properties of the ($\rho$)-speed, we obtain yet another optimality property of the Bass embedding. More generally, our next result provides an optimality criterion which is minimised by the (canonical-time) Bass-Root MVM: 

\begin{theorem}\label{thm:bass_2nd_opt}
  Let $\mu, \lambda\in\mathcal{P}^1(\R)$ be given atomless probability measures, and let $\xi^\Rc$ be the Root MVM corresponding to embedding $\lambda$. Define $\kappa = F_{\mu}^{-1}\circ F_{\lambda}$ and write $\rho = \kappa^{-1}$ for its right continuous inverse. Then the canonical-time Bass-Root MVM, $\xi^{\kappa,\Rc} = \kappa_{\#} \xi^\Rc$, is a terminating, continuous MVM with $\xi^{\kappa,\Rc}_0 = \mu$, which minimises
   \begin{align}\label{eq:bass_2nd_opt}
    \Ep{F\left(\left[\xi\right]_\infty^\rho\right)}
  \end{align}
for any increasing, convex function $F$, over the class of continuous, terminating MVMs with $\xi_0 = \mu$. \end{theorem}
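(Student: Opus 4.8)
The plan is to reduce the optimality statement to a pathwise comparison of speeds. First I would establish the key identity for the Bass--Root MVM: since $\kappa$ is Lipschitz (because $\mu,\lambda$ are atomless, $\kappa=F_\mu^{-1}\circ F_\lambda$ is increasing and, under the standing assumptions, Lipschitz up to the scaling discussed earlier), Proposition~\ref{prop:Consistent} gives that $\xi^{\kappa,\Rc}$ is a continuous, consistent Lipschitz--Markov MVM with $[\xi^{\kappa,\Rc}]_t = \langle \xi^{\kappa,\Rc}\rangle_t = \langle \M(\xi^{\kappa,\Rc})\rangle_t$. Now apply the $\rho$-transformation with $\rho=\kappa^{-1}$: since $\rho_\#\xi^{\kappa,\Rc}_t = \rho_\#\kappa_\#\xi^\Rc_t = \xi^\Rc_t$ (as $\rho\circ\kappa=\mathrm{Id}$ on the support of $\xi^\Rc$), we get $[\xi^{\kappa,\Rc}]^\rho_t = [\xi^\Rc]_t$. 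The Root MVM is itself consistent Lipschitz--Markov (Proposition~\ref{prop:Consistent}), so $[\xi^\Rc]_t = \langle \M(\xi^\Rc)\rangle_t$. But $\M(\xi^\Rc_t) = B_{t\wedge\tau_\Rc}$ is a time-changed Brownian motion (in fact Brownian motion stopped at $\tau_\Rc$), so $\langle\M(\xi^\Rc)\rangle_t = t\wedge \tau_\Rc$, and hence $[\xi^{\kappa,\Rc}]^\rho_\infty = \tau_\Rc$, a \emph{deterministic-in-law} quantity: its law is exactly the law of $\tau_\Rc$, the Root stopping time embedding $\lambda$.

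Second, for a competitor $\xi$ — an arbitrary continuous, terminating MVM with $\xi_0=\mu$ — I would look at $\chi_t:=\rho_\#\xi_t$, which is a continuous, terminating MVM with $\chi_0 = \rho_\#\mu = \lambda$. Its mean $\M^\rho(\xi_t)=\M(\chi_t)$ is a continuous UI martingale (Lemma~\ref{lem:MVMProp}) with terminal law $\lambda$ (Corollary following Lemma~\ref{lem:MVMProp}), started at $\M(\lambda)=0$; by Dambis--Dubins--Schwarz it is a time-changed Brownian motion, and the time change is $\langle\M(\chi)\rangle_\infty$, which gives a (not necessarily minimal, but UI) Skorokhod embedding of $\lambda$. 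By Proposition~\ref{prop:speed}(1) applied to $\chi$, $[\xi]^\rho_\infty = [\chi]_\infty \ge \langle\M(\chi)\rangle_\infty = \langle\M^\rho(\xi)\rangle_\infty$. So it suffices to show that $\langle\M^\rho(\xi)\rangle_\infty$ is, in the convex order, larger than $\tau_\Rc$. This is precisely the classical minimality property of the Root barrier: among all (UI) stopping times $T$ of Brownian motion with $B_T\sim\lambda$, the Root time $\tau_\Rc$ minimises $\E[F(T)]$ for every increasing convex $F$ (equivalently, minimises in convex order / minimises $\E[(T-t)_+]$ for all $t$), a result of Rost and Kiefer; see \cite{rost1976skorokhod,beiglbock2015root}. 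Applying this with $T=\langle\M^\rho(\xi)\rangle_\infty$ and chaining the inequalities yields $\E[F([\xi]^\rho_\infty)] \ge \E[F(\langle\M^\rho(\xi)\rangle_\infty)] \ge \E[F(\tau_\Rc)] = \E[F([\xi^{\kappa,\Rc}]^\rho_\infty)]$, which is the claim.

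There are two technical points to handle carefully. The first is the Lipschitz-regularity of $\kappa$: if $\mu$ has unbounded support with rapidly decaying tails, $F_\mu^{-1}$ can fail to be Lipschitz, so one must either impose this as part of the (standing) atomless hypotheses or, as the statement of Proposition~\ref{prop:Consistent} suggests, simply assume $\kappa$ Lipschitz; I would state the needed hypothesis explicitly and note that $\rho=\kappa^{-1}$ then satisfies the linear-growth bound $|\rho(x)|\le K(1+|x|)$ required for the $\rho$-speed construction to apply. The second, and the main obstacle, is the identity $[\xi^{\kappa,\Rc}]^\rho_t = [\xi^\Rc]_t = t\wedge\tau_\Rc$ at the terminal time — in particular controlling behaviour \emph{at and after} $\tau_\Rc$. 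Before $\tau_\Rc$ this is Proposition~\ref{prop:Consistent}; the subtlety is that $\rho$ is only a one-sided inverse of $\kappa$ and $\xi^\Rc$ may not be fully supported, so I must check $\rho_\#\kappa_\#\xi^\Rc_t=\xi^\Rc_t$ holds a.s.\ for all $t$ (it does: $\xi^\Rc_t$ is supported where it started, namely on $\supp\lambda$, and $\rho\circ\kappa=\mathrm{Id}$ there), and that no spurious jump in $[\xi^{\kappa,\Rc}]^\rho$ appears at $\tau_\Rc$ — this is exactly the content of the ``single atom'' estimate at the end of the proof of Proposition~\ref{prop:Consistent}, which I would invoke directly. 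Finally, one should confirm that the Rost/Kiefer optimality of the Root barrier is available in the UI (rather than minimal) setting for general centred $\lambda\in\mathcal{P}^1$; this is standard but worth a precise citation.
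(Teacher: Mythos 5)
Your argument follows the paper's own proof essentially step for step: push forward by $\rho$ to reduce the Bass--Root MVM to the Root MVM, use Proposition~\ref{prop:Consistent} to identify $[\xi^{\kappa,\Rc}]^\rho_\infty$ with $\langle\xi^\Rc\rangle_\infty=\tau_\Rc$, bound a competitor's $\rho$-speed below by $\langle\M(\rho_\#\xi)\rangle_\infty$ via Proposition~\ref{prop:speed}(1), and conclude by the optimality of the Root stopping time among UI embeddings of $\lambda$; the points you flag (Lipschitzness of $\kappa$, behaviour at $\tau_\Rc$, the Dambis--Dubins--Schwarz step) are precisely what the paper uses implicitly. One minor slip in your side remarks: Lipschitzness of $\kappa$ forces $\rho=\kappa^{-1}$ to grow \emph{at least} linearly, not at most, but this is immaterial since the required integrability of $\rho_\#\xi_t$ holds anyway, as $\E\left[\int|\rho(x)|\,\xi_t(\dx)\right]=\int|\rho(x)|\,\mu(\dx)=\int|x|\,\lambda(\dx)<\infty$.
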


In particular, we see that the (canonical-time) Bass MVM minimises \eqref{eq:bass_2nd_opt} for the choice of $\rho(x)=h^{-1}(x)$ with $h$ given as in Section \ref{sec:ex}, whereas the Root MVM minimises \eqref{eq:bass_2nd_opt} with the $\rho$-speed replaced by the non-modified speed $[\xi_\cdot]$.

\begin{proof}[Proof of Theorem \ref{thm:bass_2nd_opt}]
  The MVM $\xi^{\kappa,\Rc}$ is terminating with $\xi^{\kappa,\Rc}_0 = \mu$ by definition, and continuous by Corollary~\ref{cor:cont} and Proposition~\ref{prop:Consistent}. Moreover, $\rho_{\#} \xi^{\kappa,\Rc} = \xi^\Rc$, and so $[\xi^{\kappa,\Rc}]_{\infty}^\rho = [\xi^\Rc]_\infty$. But by Proposition~\ref{prop:Consistent}, we have $[\xi^\Rc]_\infty = \langle \xi^\Rc \rangle_\infty$, and so $\E[F([\xi^{\kappa,\Rc}]_\infty^\rho)] = \E[F(\langle\xi^\Rc\rangle_\infty)]$.  Note in particular, that $M_\cdot^\Rc := \M(\xi^\Rc_\cdot)$ is a stopped Brownian motion, stopped according to the first hitting time of a Root barrier, and also has stopped law $\lambda$.

  Now consider any other $\xi$ satisfying the conditions of the theorem. As above, we have $\rho_{\#} \xi_\cdot = \chi_\cdot$ for some terminating MVM $\chi$. Then $[\xi]^\rho_t = [\chi]_t \ge \langle\chi\rangle_t$, by Proposition~\ref{prop:speed} (\ref{item:2}). In particular, $M_\cdot := \M(\chi_\cdot)$ is a uniformly integrable, continuous martingale, with $M_\infty \sim \lambda$, and so $\Ep{F\left([\xi]^\rho_\infty\right)} \ge \E[F(\langle M\rangle_\infty)] \ge \E[F(\langle M^\Rc \rangle_\infty)]$, where the first step follows from the increase of $F$, and the second from convexity of $F$ and the optimality of the Root embedding, which completes the proof.
\end{proof}

\appendix
	
\section{Auxiliary estimates}

\begin{lemma}\label{lem:appendix_tv}
  Let $0<u<v$, then
  \begin{align*}
    ||\Nc(0,u)-\Nc(0,v)||_{\mathsf{TV}}\le \sqrt{\frac{2}{\pi}}\frac{v-u}{u}.
  \end{align*} 
\end{lemma}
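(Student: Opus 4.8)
The plan is to prove the total variation bound between two centred Gaussians of different variances by a direct computation, reducing it to a one-dimensional estimate via the explicit form of the densities. Writing $\phi_u$ for the density of $\Nc(0,u)$, the total variation distance equals $\frac12\int_{\R}|\phi_u(x)-\phi_v(x)|\dx$, and since $0<u<v$, the difference $\phi_u(x)-\phi_v(x)$ changes sign exactly where $\phi_u(x)=\phi_v(x)$, i.e.\ at $\pm x_0$ for an explicit $x_0$ depending on $u,v$. So $\|\Nc(0,u)-\Nc(0,v)\|_{\mathsf{TV}} = \Pr(|Z_u|\le x_0) - \Pr(|Z_v|\le x_0)$ where $Z_w\sim\Nc(0,w)$; equivalently it is $F(x_0/\sqrt u)-F(x_0/\sqrt v)$ for a function $F(a) = \Pr(|N|\le a)$ with $N$ standard normal.

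First I would make the change of variables precise, expressing the TV distance as $G(u,v)$ for the explicit function obtained above, and then I would bound it by controlling its derivative in $v$ (with $u$ fixed), or more cleanly by coupling: write $\Nc(0,v) = \Nc(0,u) * \Nc(0,v-u)$ and use the standard fact that $\|\lambda - \lambda * \Nc(0,\epsilon)\|_{\mathsf{TV}}$ is small. Actually the cleanest route is probably the Pinsker-type or direct route via the derivative: differentiate $t\mapsto \|\Nc(0,u)-\Nc(0,t)\|_{\mathsf{TV}}$ in $t$ at values $t\ge u$, bound the derivative by $\frac{1}{2}\int |\partial_t \phi_t(x)|\dx = \frac12\int\left|\frac{\partial}{\partial t}\phi_t(x)\right|\dx$, and use that $\partial_t \phi_t = \frac12 \partial_{xx}\phi_t$ (the heat equation), so $\int|\partial_t\phi_t(x)|\dx = \frac12\int|\partial_{xx}\phi_t(x)|\dx = \frac12 \cdot 2\phi_t(0)\cdot(\text{something})$. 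Indeed $\int_{\R}|\phi_t''(x)|\dx = 2\cdot\frac{2}{\sqrt{2\pi t}}\cdot e^{-1/2}/? $ — more carefully, $\phi_t''$ vanishes at $x=\pm\sqrt t$ and $\int|\phi_t''| = 4|\phi_t'(\sqrt t)| = 4\cdot\frac{1}{t}\cdot\sqrt t\,\phi_t(\sqrt t) = \frac{4}{\sqrt t}\phi_t(\sqrt t) = \frac{4}{\sqrt t}\cdot\frac{e^{-1/2}}{\sqrt{2\pi t}}$. This gives $\left|\frac{d}{dt}\|\Nc(0,u)-\Nc(0,t)\|_{\mathsf{TV}}\right| \le \frac{1}{2}\cdot\frac14\int|\phi_t''| $, and integrating from $u$ to $v$ and bounding $t\ge u$ in the denominator yields a bound of the form $C\frac{v-u}{u}$; one then checks the constant works out to $\sqrt{2/\pi}$, possibly after a sharper accounting.

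Alternatively, and perhaps most transparently for getting the exact constant $\sqrt{2/\pi}$, I would avoid the heat-equation detour and simply estimate $G(u,v) = F(x_0/\sqrt u) - F(x_0/\sqrt v)$ directly: by the mean value theorem this is $F'(a^*)\,(x_0/\sqrt u - x_0/\sqrt v)$ for some intermediate $a^*$, where $F'(a) = 2\phi_1(a) = \sqrt{2/\pi}\,e^{-a^2/2} \le \sqrt{2/\pi}$; and $x_0/\sqrt u - x_0/\sqrt v = \frac{x_0}{\sqrt u}\left(1 - \sqrt{u/v}\right) \le \frac{x_0}{\sqrt u}\cdot\frac{v-u}{v}$. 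Since $x_0 = \sqrt{\frac{uv}{v-u}\log(v/u)}$, one has $\frac{x_0}{\sqrt u}\cdot\frac{v-u}{v} = \sqrt{\frac{v-u}{v}\log(v/u)}\cdot\sqrt{\frac{v-u}{v}}\le\ldots$ and using $\log(v/u)\le (v-u)/u$ this collapses to something $\le \frac{v-u}{u}$. Multiplying by the bound $\sqrt{2/\pi}$ on $F'$ gives the claim.

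The main obstacle I anticipate is bookkeeping the constant: the crude bounds $F'(a)\le\sqrt{2/\pi}$ and $\log(v/u)\le(v-u)/u$ are individually tight only in the limit $v\to u$, where they are both simultaneously tight, so the product should indeed give the sharp constant $\sqrt{2/\pi}$ — but I would need to verify carefully that no further loss is incurred in the intermediate steps (in particular in passing from $1-\sqrt{u/v}$ to $(v-u)/v$ versus $(v-u)/u$, and in the treatment of $x_0$). If the constant does not come out cleanly from the MVT approach, the heat-equation argument gives a robust fallback, at the cost of a slightly less elegant presentation; in either case the statement is a routine, if slightly fiddly, Gaussian estimate and presents no conceptual difficulty.
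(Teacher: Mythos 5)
Your third (mean-value-theorem) route is correct and is essentially the paper's own argument in different packaging: the paper likewise locates the crossing point of the densities (your $x_0$, their $\delta$ solving $p_u(\delta)=p_v(\delta)$), reduces the TV distance to the $\Nc(0,v)$-mass of the interval $[\delta,\delta\sqrt{v/u})$ via the scaling $x\mapsto x\sqrt{v/u}$, bounds that by $p_v(0)\,\delta\,(\sqrt{v/u}-1)$, and finishes with exactly your elementary estimates $\log(1+x)\le x$ (giving $\delta\le\sqrt v$) and $\sqrt{1+x}\le 1+x/2$; your ``$F'(a^*)\le\sqrt{2/\pi}$ times increment'' step is the same ``sup-density times interval length'' bound after normalising to variance one, and your intermediate display has a harmless slip ($\frac{x_0}{\sqrt u}\frac{v-u}{v}=\sqrt{\tfrac{(v-u)\log(v/u)}{v}}\le\tfrac{v-u}{\sqrt{uv}}\le\tfrac{v-u}{u}$), so the constant does come out cleanly with no further loss. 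One caveat on conventions: you work with $\tfrac12\int|p_u-p_v|$, whereas the paper's proof in fact bounds the unnormalised quantity $\int|p_u-p_v|=4\int_\delta^\infty(p_v-p_u)$; with your normalisation the crude bound $F'\le\sqrt{2/\pi}$ cannot recover that stronger statement when $v/u$ is near $1$ (one would need, e.g., $F'(a^*)\le\sqrt{2/\pi}\,e^{-x_0^2/2v}$, or the paper's route). Since the lemma is applied in Proposition \ref{prop:Consistent} to bound the uncoupled mass under a maximal coupling, which is the normalised TV distance, your version suffices for its use in the paper, so this is a matter of convention rather than a substantive gap.
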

	
\begin{proof}
  Denote the densities corresponding to $\Nc(0,u)$ and $\Nc(0,v)$ by $p_u$ and $p_v$, and denote by $\delta>0$ the value for which $p_u(\delta)=p_v(\delta)$. Then, the total variation distance between $\Nc(0,u)$ and $\Nc(0,v)$ is bounded by
  \begin{align}\label{eq:app_tv}
    ||p_u-p_v||_{\mathsf{TV}}
    & \le 4\int_{\delta}^\infty p_v(\dx)-p_u(\dx)\nonumber\\
    & = 4\left(p_v([\delta,\infty)) - p_v\left([\delta\sqrt{\frac{v}{u}},\infty)\right)\right)\nonumber\\
    & = 4 p_v\left([\delta,\delta\sqrt{\frac{v}{u}})\right) \;\le\; 4 p_v(0)\delta\left(\sqrt{\frac{v}{u}}-1\right).
  \end{align}
  We note that $\delta$ solves
  $v^{-\frac{1}{2}}e^{-\frac{1}{2}\frac{x^2}{v}}
  =
  u^{-\frac{1}{2}}e^{-\frac{1}{2}\frac{x^2}{u}}$,
  which implies that
  \begin{align*}
    \delta=\sqrt{\frac{\ln (\frac{v}{u})}{\frac{1}{u}-\frac{1}{v}}}
    =
    \sqrt{\frac{\ln (1+\frac{v-u}{u})}{\frac{1}{u}-\frac{1}{v}}}
    \le
    \sqrt{\frac{\frac{v-u}{u}}{\frac{v-u}{uv}}}
    = \sqrt{v}.
  \end{align*}
  Moreover,
  \begin{align*}
    \sqrt{\frac{v}{u}}
    =\sqrt{1+\frac{v-u}{u}}
    \le 1+\frac{1}{2}\frac{v-u}{u}.
  \end{align*}
  Putting the above bounds for $\delta$ and $\sqrt{v/u}$ back into \eqref{eq:app_tv} and using that $p_v(0)=(2\pi v)^{-\frac{1}{2}}$, we easily conclude. 
 \end{proof}

\begin{lemma}\label{lem:appendix_estimates}
  Let $B_t$, $t\ge 0$ be a BM and define $\tau_a:=\inf\{t\ge 0:B_t\ge a\}$ and $\tau_{a,a}:=\inf\{t\ge 0: B_t\not\in(-a,a)\}$; the corresponding density functions exist and we denote them by $p^a$ and $p^{a,a}$.  Then the following holds:
  \begin{itemize}
  \item[i)] For any $\beta>0$, there exists $\Delta>0$ such that for $\varepsilon<\Delta$, 
    \begin{align*}
      \P(\tau_a\le \varepsilon)\le \beta\varepsilon;
    \end{align*} 
  \item[ii)] For any $\beta>0$, there exists $\delta>0$ and $\Delta>0$, such that for $\varepsilon<\Delta$,  
    \begin{align*}
      \int_0^\delta | p^a(t+\varepsilon)-p^a(t) | \dt \le \beta \varepsilon.  
    \end{align*}
  \item[iii)] For any $\beta>0$, there exists $\delta>0$ and $\Delta>0$, such that for $\varepsilon<\Delta$,  
    \begin{align*}
      \int_0^\delta | p^{a,a}(t+\varepsilon)-p^{a,a}(t) | \dt \le \beta \varepsilon.  
    \end{align*}
  \end{itemize}
\end{lemma}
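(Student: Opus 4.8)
The plan is to handle part (i) via the reflection principle and a Gaussian tail estimate, and to treat parts (ii) and (iii) in a unified way using the explicit (image-series) form of the first-passage densities. Throughout one uses $a>0$, which is implicit in the statement.

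For (i), I would first note that by the reflection principle $\P(\tau_a \le \varepsilon) = 2\P(B_\varepsilon \ge a) = 2\P(B_1 \ge a/\sqrt{\varepsilon})$, and then bound the Gaussian tail by Mills' ratio, $\P(B_1 \ge a/\sqrt{\varepsilon}) \le \tfrac{\sqrt{\varepsilon}}{a\sqrt{2\pi}}\, e^{-a^2/(2\varepsilon)}$. Since $e^{-a^2/(2\varepsilon)}$ decays faster than any power of $\varepsilon$ as $\varepsilon\downarrow 0$, the right-hand side is in fact $o(\varepsilon)$, so for every $\beta>0$ one may choose $\Delta>0$ small enough that $\P(\tau_a\le\varepsilon)\le\beta\varepsilon$ for $\varepsilon<\Delta$. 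This part is routine.

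For (ii) and (iii), I would use that $p^a(t) = \tfrac{a}{\sqrt{2\pi t^3}}\,e^{-a^2/(2t)}$, and that $p^{a,a}$, obtained by the method of images on $(-a,a)$ with source at $0$, is a signed sum $\sum_k \varepsilon_k\, \tfrac{c_k}{\sqrt{2\pi t^3}}\,e^{-c_k^2/(2t)}$ over image distances $c_k\ge a$ (the nearest images at distance exactly $a$), with the series and its termwise $t$-derivative converging locally uniformly on $(0,\infty)$. Writing $p$ for either density, the key structural facts are that $p\in C^1((0,\infty))$ and that, thanks to the factor $e^{-c^2/(2t)}$ with $c\ge a>0$, both $p(t)$ and $p'(t)$ tend to $0$ as $t\downarrow 0$; hence $p'\in L^1_{\mathrm{loc}}([0,\infty))$ and $\int_0^\eta |p'(s)|\,\ds \to 0$ as $\eta\downarrow 0$. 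Given these, write $p(t+\varepsilon)-p(t)=\int_t^{t+\varepsilon} p'(s)\,\ds$ and apply Fubini to get $\int_0^\delta |p(t+\varepsilon)-p(t)|\,\dt \le \varepsilon\int_0^{\delta+\varepsilon}|p'(s)|\,\ds$; then, given $\beta>0$, first choose $\delta$ with $\int_0^{2\delta}|p'(s)|\,\ds\le\beta$ and set $\Delta:=\delta$, so that $\varepsilon<\Delta$ forces the bound $\le\beta\varepsilon$. The same $\delta,\Delta$ serve for both $p^a$ and $p^{a,a}$.

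The only delicate point, and the one I expect to be the main obstacle, is justifying facts (a)–(b) for $p^{a,a}$: the validity of the image-series representation and, in particular, locally-uniform convergence of the differentiated series, so that term-by-term differentiation is legitimate and the $t\downarrow 0$ limit can be taken termwise. A cleaner alternative that avoids manipulating the series is to prove that $p^{a,a}$ is monotone increasing on some neighbourhood $(0,\delta_0)$ of $0$ — which holds because near $0$ it is dominated by $2p^a(t)$ plus corrections of order $t^{-k}e^{-c/t}$ with $c>a^2/2$, and $p^a$ is increasing on $(0,a^2/3)$ — and then telescope: for $\varepsilon<\delta<\delta_0$ with $\delta+\varepsilon<\delta_0$, $\int_0^\delta |p(t+\varepsilon)-p(t)|\,\dt = \int_\delta^{\delta+\varepsilon}p - \int_0^\varepsilon p \le \varepsilon\sup_{[\delta,2\delta]}p$, which is $\le\beta\varepsilon$ once $\delta$ is small enough that $p\le\beta$ on $(0,2\delta]$. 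This telescoping argument applies verbatim to $p^a$ as well, giving a self-contained proof of (ii).
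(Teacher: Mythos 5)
Your proposal is correct in substance, and its main mechanism differs from the paper's in a way worth recording. For (i) both you and the paper start from the reflection principle; you finish with a Gaussian-tail (Mills' ratio) bound showing $\P(\tau_a\le\varepsilon)=o(\varepsilon)$, while the paper writes $\P(\tau_a\le\varepsilon)=p^a(s)\varepsilon$ for some $s<\varepsilon$ and uses $p^a(t)\to0$ as $t\downarrow0$; the content is the same. For (ii) your primary route is genuinely different: you write $p(t+\varepsilon)-p(t)=\int_t^{t+\varepsilon}p'(s)\,\ds$ and use Fubini to reduce everything to choosing $\delta$ with $\int_0^{2\delta}|p'|\le\beta$, which only needs $|(p^a)'|$ to be integrable and small near $0$ and requires no shape information. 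The paper instead exploits unimodality --- $p^a$ is increasing on $(0,a^2/3)$ --- and telescopes the integral into $\P(\tau_a\le\delta+\varepsilon)-\P(\tau_a\le\delta)=p^a(s)\varepsilon$; this is exactly your ``cleaner alternative''. Your Fubini route is slightly more robust (no monotonicity needed), the paper's avoids any discussion of $\int|p'|$ by working directly with the c.d.f.

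For (iii) both arguments rest on the same ingredient, the method-of-images series for $p^{a,a}$ cited from Borodin--Salminen: the paper asserts from it that $p^{a,a}\in C^1$, vanishes as $t\downarrow 0$, and is non-decreasing on some interval $(0,\bar t\,]$, and then reuses the argument of (ii). You correctly identify the justification of termwise differentiation (locally uniform convergence of the differentiated series) as the only real work; the paper passes over this about as lightly as you do, so this is not a gap relative to the paper's own standard. One caveat, though, about your fallback: knowing $p^{a,a}(t)=2p^a(t)+O\bigl(t^{-3/2}e^{-9a^2/(2t)}\bigr)$ controls \emph{values}, not monotonicity. To conclude that $p^{a,a}$ increases near $0$ you still need the derivative of the correction series to be negligible against $(p^a)'(t)\sim \tfrac{a^3}{2\sqrt{\pi}}\,t^{-7/2}e^{-a^2/(2t)}$, i.e.\ you must differentiate the series after all; this is easy, since every correction term carries the factor $e^{-9a^2/(2t)}$ against the leading $e^{-a^2/(2t)}$, but it should be stated, as otherwise the ``alternative'' does not actually avoid the delicate point it was meant to bypass. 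With that sentence added, either of your routes closes (iii).
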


\begin{proof}
  i)	Recall that by the reflection principle 
  \begin{align*}
    \P(\tau_a\le t)=\P(|B_t|\ge a)=2\int_a^\infty \frac{1}{\sqrt{2\pi t}}e^{\frac{-x^2}{2t}}\dx;
  \end{align*} 
  differentiating twice in $t$ we obtain
  \begin{align}\label{eq:density_tau_a}
    p^a(t)=\frac{a}{\sqrt{2\pi t^3}}e^{\frac{-a^2}{2t}}. 
    \quad\textrm{and}\quad
    \pian{p^a}{t}(t)= \frac{a(a^2-3t)}{\sqrt{4\pi t^7}}e^{\frac{-a^2}{2t}}.
  \end{align}
  Next, since $t\mapsto \P(\tau_a\le t)$ is differentiable, we have $\P(\tau_a\le \varepsilon)=p^a(s)\varepsilon$ for some $s<\varepsilon$. We note from \eqref{eq:density_tau_a} that $p^a$ is continuous and that $\lim_{t\to 0}p^a(t)=0$; hence, for any $\beta>0$, by choosing $\Delta$ small enough, we may ensure that $p^a(s)\le \beta$, for all $s<\Delta$ which yields the claim.
			
  ii) We see from \eqref{eq:density_tau_a} that $p^a(t)$ is increasing for $t\le \bar t$ and decreasing for $t\ge \bar t$, where $\bar t=a^2/3$.  Let $\Delta< \bar t$ and $\delta:=\bar t - \Delta$. Then, for any $\varepsilon<\Delta$,
  \begin{align*}
    \int_0^\delta | p^a(t+\varepsilon)-p^a(t) | \dt
    \le 
    \P(\tau_a\le \delta+\varepsilon)-\P(\tau_a\le \delta)
    =p^a(s)\varepsilon, 
  \end{align*}
  for some $s\in (\delta,\bar t)$. By use of the same arguments as in i), for any $\beta>0$, choosing if necessary $\bar t$ even smaller, we may ensure that $p^a(s)\le \beta$ for all $s<\bar t$ which yields the claim. 
		
  iii) According to \cite{Borodin:aa} (see pp. 355 and 641),
  \begin{align*}
    p^{a,a}(t)
    =\sum_{-\infty}^\infty (-1)^k p^{a(1+2k)}(t)
    =2 \sum_{-\infty}^\infty p^{a(1+4k)}(t);
  \end{align*}
  hence $p^{a,a}$ is continuously differentiable and $\lim_{t\to 0}p^{a,a}(t)=0$. Since it is further a density function and thus non-negative, there must exist $\bar t>0$ such that $p^{a,a}(t)$ is non-decreasing for $t\le \bar t$. We may thus apply the same argument as in ii) to conclude. 
\end{proof}

\begingroup
\sloppy

\renewbibmacro*{in:}{}

\printbibliography

\endgroup


\end{document}
